\documentclass[12pt]{amsart}
\headheight=8pt     \topmargin=0pt \textheight=624pt
\textwidth=432pt \oddsidemargin=18pt \evensidemargin=18pt

\usepackage{amssymb}
\usepackage{verbatim}
\usepackage{hyperref}
\usepackage{color}

\begin{document}

\newtheorem{theorem}{Theorem}    
\newtheorem{proposition}[theorem]{Proposition}
\newtheorem{conjecture}[theorem]{Conjecture}
\def\theconjecture{\unskip}
\newtheorem{corollary}[theorem]{Corollary}
\newtheorem{lemma}[theorem]{Lemma}
\newtheorem{sublemma}[theorem]{Sublemma}
\newtheorem{observation}[theorem]{Observation}
\theoremstyle{definition}
\newtheorem{definition}{Definition}
\newtheorem{notation}[definition]{Notation}
\newtheorem{remark}[definition]{Remark}
\newtheorem{question}[definition]{Question}
\newtheorem{questions}[definition]{Questions}
\newtheorem{example}[definition]{Example}
\newtheorem{problem}[definition]{Problem}
\newtheorem{exercise}[definition]{Exercise}

\numberwithin{theorem}{section} \numberwithin{definition}{section}
\numberwithin{equation}{section}

\def\earrow{{\mathbf e}}
\def\rarrow{{\mathbf r}}
\def\uarrow{{\mathbf u}}
\def\varrow{{\mathbf V}}
\def\tpar{T_{\rm par}}
\def\apar{A_{\rm par}}

\def\reals{{\mathbb R}}
\def\torus{{\mathbb T}}
\def\heis{{\mathbb H}}
\def\integers{{\mathbb Z}}
\def\naturals{{\mathbb N}}
\def\complex{{\mathbb C}\/}
\def\distance{\operatorname{distance}\,}
\def\support{\operatorname{support}\,}
\def\dist{\operatorname{dist}\,}
\def\Span{\operatorname{span}\,}
\def\degree{\operatorname{degree}\,}
\def\kernel{\operatorname{kernel}\,}
\def\dim{\operatorname{dim}\,}
\def\codim{\operatorname{codim}}
\def\trace{\operatorname{trace\,}}
\def\Span{\operatorname{span}\,}
\def\dimension{\operatorname{dimension}\,}
\def\codimension{\operatorname{codimension}\,}
\def\nullspace{\scriptk}
\def\kernel{\operatorname{Ker}}
\def\ZZ{ {\mathbb Z} }
\def\p{\partial}
\def\rp{{ ^{-1} }}
\def\Re{\operatorname{Re\,} }
\def\Im{\operatorname{Im\,} }
\def\ov{\overline}
\def\eps{\varepsilon}
\def\lt{L^2}
\def\diver{\operatorname{div}}
\def\curl{\operatorname{curl}}
\def\etta{\eta}
\newcommand{\norm}[1]{ \|  #1 \|}
\def\expect{\mathbb E}
\def\bull{$\bullet$\ }
\def\C{\mathbb{C}}
\def\R{\mathbb{R}}
\def\Rn{{\mathbb{R}^n}}
\def\Sn{{{S}^{n-1}}}
\def\M{\mathbb{M}}
\def\N{\mathbb{N}}
\def\Q{{\mathbb{Q}}}
\def\Z{\mathbb{Z}}
\def\F{\mathcal{F}}
\def\L{\mathcal{L}}
\def\S{\mathcal{S}}
\def\supp{\operatorname{supp}}
\def\dist{\operatorname{dist}}
\def\essi{\operatornamewithlimits{ess\,inf}}
\def\esss{\operatornamewithlimits{ess\,sup}}
\def\xone{x_1}
\def\xtwo{x_2}
\def\xq{x_2+x_1^2}
\newcommand{\abr}[1]{ \langle  #1 \rangle}

\newcommand{\Norm}[1]{ \left\|  #1 \right\| }
\newcommand{\set}[1]{ \left\{ #1 \right\} }
\def\one{\mathbf 1}
\def\whole{\mathbf V}
\newcommand{\modulo}[2]{[#1]_{#2}}

\def\scriptf{{\mathcal F}}
\def\scriptg{{\mathcal G}}
\def\scriptm{{\mathcal M}}
\def\scriptb{{\mathcal B}}
\def\scriptc{{\mathcal C}}
\def\scriptt{{\mathcal T}}
\def\scripti{{\mathcal I}}
\def\scripte{{\mathcal E}}
\def\scriptv{{\mathcal V}}
\def\scriptw{{\mathcal W}}
\def\scriptu{{\mathcal U}}
\def\scriptS{{\mathcal S}}
\def\scripta{{\mathcal A}}
\def\scriptr{{\mathcal R}}
\def\scripto{{\mathcal O}}
\def\scripth{{\mathcal H}}
\def\scriptd{{\mathcal D}}
\def\scriptl{{\mathcal L}}
\def\scriptn{{\mathcal N}}
\def\scriptp{{\mathcal P}}
\def\scriptk{{\mathcal K}}
\def\frakv{{\mathfrak V}}

\title[Some one-sided estimates for oscillatory singular integrals]
{Some one-sided estimates for oscillatory singular integrals}
\author[Zunwei Fu]{Zunwei Fu} 

\author[Shanzhen Lu]{Shanzhen Lu}

\author[Yibiao Pan]{Yibiao Pan}

\author[Shaoguang Shi]{Shaoguang Shi}

\subjclass[2000]{
Primary 42B20; Secondary 42B25.
}
%
\keywords{One-sided oscillatory singular integral; one-sided weighted Hardy space; $A_{p}^{+}$ weight.}
\thanks{This work was partially supported by
 NSF of China (Grant Nos. 11271175, 10931001 and 11301249), NSF of Shandong Province (Grant No. ZR2012AQ026), the Key Laboratory of Mathematics and Complex System (Beijing Normal
University), the Fundamental Research Funds for the Central Universities (Grant No. 2012CXQT09) and the AMEP(DYSP) of Linyi University. \\ \indent }
\address{Zunwei Fu\\Department of Mathematics\\Linyi
University \\
Linyi 276005\\
P. R. China } \email{zwfu@mail.bnu.edu.cn}

\address{Shanzhen Lu\\School of Mathematical
Sciences\\Beijing Normal University\\Beijing, 100875\\
P. R. China} \email{lusz@bnu.edu.cn}

\address{Yibiao Pan\\Department of Mathematics\\
University of Pittsburgh \\
Pittsburgh, PA 15260\\
 U.S.A.}
\email{yibiao@pitt.edu}

\address{Shaoguang Shi\\Department of Mathematics\\
Linyi
University \\
Linyi 276005\\
P. R. China}
\email{shishaoguang@mail.bnu.edu.cn}


\maketitle

\begin{abstract}
The purpose of this paper is to establish some one-sided estimates for oscillatory singular integrals. The boundedness of certain oscillatory singular integral on weighted Hardy spaces $H^{1}_{+}(w)$ is proved. It is here also show that the $H^{1}_{+}(w)$ theory of oscillatory singular integrals above cannot be extended to the case of $H^{q}_{+}(w)$ when $0<q<1$ and $w\in A_{p}^{+}$, a wider weight class than the classical Muckenhoupt class. Furthermore, a criterion on the weighted $L^{p}$-boundednesss of the oscillatory singular integral is given.
\end{abstract}

\section{Introduction and main results}\label{section1} 

The study of one-sided operators was motivated not only as the generalization
of the theory of two-sided ones but also by the demand in ergodic theory \cite{C}, \cite{DS}. The well-known
Riemann-Liouville fractional integral can be viewed as the
one-sided version of Riesz potential \cite{MR}.
In
\cite{Saw}, Sawyer studied the weighted theory of one-sided maximal Hardy-Littlewood
operators in depth for the first time. Since then, numerous papers have appeared,
among which we choose to refer to \cite{AFM}, \cite{AS}, \cite{FLSS}, \cite{Ma}, \cite{MOT} and \cite{RS2.5} about one-sided
operators, \cite{AC}, \cite{FL}, \cite{OST}, \cite{RiT}, \cite{RS1}, and \cite{RS3} about one-sided spaces, respectively. Interestingly, lots of
results show that for a class of smaller operators (one-sided operators) and a class
of wider weights (one-sided weights), many famous results in harmonic analysis still
hold.

Besides the Hardy-Littlewood maximal operators and Calder\'{o}n-Zygmund singular integral operators, oscillatory integral operators have
played an important role in harmonic analysis from its outset; three chapters are devoted to them in the celebrated Stein's book \cite{St}. Many important operators in harmonic analysis are some versions of oscillatory integrals, such as Fourier transform, Bochner-Riesz means, Radon
transform in CT technology and so on. For a more complete account on
oscillatory integrals in classical harmonic analysis, we would like to refer the interested reader to \cite{G}, \cite{L2}, \cite{LDY}, \cite{LZ1}, \cite{LZ2},  \cite{PS}, \cite{RS}, \cite{Sa} and references therein. In more recent times, the operators fashioned from oscillatory integrals, such
as pseudo-differential operator in PDE become another motivation to study
them. Based on the estimates of some kinds of oscillatory integrals, one can
establish the well-posedness theory of a class of dispersive equations,
for some of these works, we refer to \cite{CM}, \cite{KPV1} and \cite{KPV2}.

Inspired by theory of oscillatory singular operators and one-sided operators, the authors of this paper defined the one-sided oscillatory integral operator in \cite{FLSS} (see also \cite{FSL}), which is just the object of this paper. We first recall its definitions as
$$
T^{+}f(x)=\lim_{\varepsilon\rightarrow0^{+}}\int_{x+\varepsilon}^{\infty}e^{iP(x,y)}K(x-y)f(y)dy=\mathrm{p.v.}\int_{x}^{\infty}e^{iP(x,y)}K(x-y)f(y)dy
$$
and
$$
T^{-}f(x)=\lim_{\varepsilon\rightarrow0^{+}}\int_{-\infty}^{x-\varepsilon}e^{iP(x,y)}K(x-y)f(y)dy=
\mathrm{p.v.}\int_{-\infty}^{x}e^{iP(x,y)}K(x-y)f(y)dy,
$$
where $P(x,y)$ is a real polynomial defined on
$\mathbb{R}\times\mathbb{R}$, and $K$ is a one-sided
Calder\'{o}n-Zygmund kernel with support in
$\mathbb{R}^{-}=(-\infty,0)$ and $\mathbb{R}^{+}=(0,+\infty)$,
respectively. We recall that a function $K\in L_{loc}^{1}\{\mathbb{R}/{\{0\}}\}$ is a Calder\'{o}n-Zygmund kernel, if
there exists a finite constant C such that the following properties are satisfied:

(a)
$$
\left|\int_{a<|x|<b}K(x)\, dx\right|\leq C, \quad 0<a<b
$$
holds for all $a$ and $b$, and there exists $\lim_{\varepsilon\rightarrow 0^{+}}\int_{\varepsilon<|x|<1}K(x)dx,$

(b) $$
|K(x)|\leq{C}/{|x|},  \,\,x\neq 0,
$$

(c)
$$|K(x - y)-K(x)| \leq {C|y|}/{|x|^{2}}$$
for all $x$ and $y$ with $|x|>2|y|>0$.

A Calder\'{o}n-Zygmund kernel with support in $(-\infty, 0)$ (or in $(0, \infty)$) will be called the one-sided
Calder\'{o}n-Zygmund kernel. In \cite{AFM}, Aimar, Forzani and Mart\'{\i}n-Reyes give an example of
such kernel
$$K(x)=\frac{\sin(\log|x|)}{(x\log|x|)}\chi_{(-\infty,0)}(x),$$
where $\chi_{E}$ denotes the characteristic function on a set $E$.

In order to give the main results of our paper, some definitions and propositions for one-sided weights are needed.
Let $f(x)$ be a measurable function defined on $\mathbb{R}$. The one-sided Hardy-Littlewood maximal functions $M^{+}f(x)$ and $M^{-}f(x)$ are defined by
$$M^{+}f(x)=\sup_{h>0}\frac{1}{h}\int_{x}^{x+h}|f(y)|\,dy$$
and
$$M^{-}f(x)=\sup_{h>0}\frac{1}{h}\int_{x-h}^{x}|f(y)|\,dy,$$
which arose in the ergodic  maximal function, see \cite{Saw}.

As usual, a weight $w(x)$ is a measurable and non-negative function. If $O\subset \mathbb{R}$ is a Lebesgue measurable set, we denote its $w$-measure by $w(O)=\int_{O}w(t)dt$. A function $f(x)$ belongs to $L^{p}(w)(0<p\leq\infty)$, if $\|f\|_{L^{p}(w)}=(\int_{\mathbb{R}}|f(x)|^{p}w(x)dx)^{1/p}<\infty$. If $w(x)=1$, we denote $\|f\|_{L^{p}(w)}$ simply by $\|f\|_{L^{p}(\mathbb{R})}$.
The classical Dunford-Schwartz ergodic theorem (see \cite{DS}) can be considered as the first result about weights (one-sided weights) for $M^{+}$ and $M^{-}$. A weight $w(x)$ belongs to the class $A_{p}^{+}$, $A_{p}^{-}$ (one-sided $A_{p}$ weights) defined by Sawyer \cite{Saw}, if they satisfy the following conditions:
$$
 A_{p}^{+}(w)=:\sup_{a<b<c}\frac{1}{(c-a)^{p}}\int_{a}^{b}w(x)\,dx\left(\int_{b}^{c}w(x)^{1-p'}\,dx\right)^{p-1}<\infty,
$$
$$
 A_{p}^{-}(w)=:\sup_{a<b<c}\frac{1}{(c-a)^{p}}\int_{b}^{c}w(x)\,dx\left(\int_{a}^{b}w(x)^{1-p'}\,dx\right)^{p-1}<\infty,
$$
 where $1<p<\infty$. When $p=1$ and $p=\infty$,
\begin{equation}\label{1.1}
A_{1}^{+}:\quad M^{-}w\leq Cw,\quad A_{1}^{-}:\quad M^{+}w\leq Cw,
\end{equation}
for some constant $C$ and
$$
A_{\infty}^{+}=\cup_{p>1}A_{p}^{+},\quad A_{\infty}^{-}=\cup_{p>1}A_{p}^{-}.
$$
The smallest constant $C$ for which (\ref{1.1}) is satisfied will be denoted by $A_{1}^{+}(w)$ and $A_{1}^{-}(w)$.
$A_{p}^{+}(w)$ (or $A_{p}^{-}(w)$), $p\geq 1$, will be called the $A_{p}^{+}$ (or $A_p^-$) constant of $w$.
Here
$A_{p}(1\leq p\leq\infty)$ denotes the Muckenhoupt class \cite{Mu}.
This class consists of weight functions $w$ for which
$$A_{p}:\sup_I\left(\frac{1}{|I|}\int_{I}w(x)dx\right)\left(\frac{1}{|I|}\int_{I}w(x)^{1-p'}dx\right)^{p-1}<\infty,\quad
1< p<\infty,$$
$$A_{1}:Mw\leq Cw,$$ and $$\quad A_{\infty}=\cup_{p>1}A_{p},
$$
where the supremum is taken over all intervals $I\subset\mathbb{R}$, $1/p+1/p'=1$ and $M$ is the classical Hardy-Littlewood maximal function $Mf(x)=\sup_{h>0}\frac{1}{h}\int_{x-h}^{x+h}|f(y)|\,dy$. Throughout this paper, the letter $C$ is used for various constants, and may change from one occurrence to another.

Given $w(x)\in A_{p}^{+}$, $1\leq p<\infty$, one can define $x_{-\infty}$ and $x_{+\infty}$  with $-\infty\leq x_{-\infty}\leq x_{+\infty}\leq \infty$ (see \cite{RS2}), such that

(a)\,\,$w(x)=0,$ \quad $x\in (-\infty, x_{-\infty});$

(b)\,\,$w(x)>0,$ \quad $x\in (x_{-\infty}, +\infty).$

In \cite{Saw}, Sawyer obtained the characterizations of the weighted inequalities for $M^{+}$ and $M^{-}$, respectively. He proved that $M^{+}$ was bounded on $L^{p}(w)$ if and only if $w\in A_{p}^{+}$ with $1<p<\infty$. For $p=1$, Sawyer also showed the weak weighted estimate for $M^{+}$ holds if and only if $w\in A_{1}^{+}$. Let us remark here that similar results can be obtained
for the left-hand-side operator (for example, $M^{-}$) by changing the condition
$A_{p}^{+}$ by $A_{p}^{-}$. Together with the characterizations of the weighted inequalities
for $M^{+}$ and $M^{-}$, Sawyer obtained some properties of the
classes $A_{p}^{+}$ and $A_{p}^{-}$:

(a)\,\, If $w\in A_{1}^{+}$, then $w^{1+\varepsilon}\in A_{1}^{+}$
for some $\varepsilon>0$.

(b)\,\, If $1\leq
p<\infty$, then $A_{p}=A_{p}^{+}\bigcap A_{p}^{-}$, $A_{p}\subset
A_{p}^{+}$, $A_{p}\subset A_{p}^{-}$.

(c)\,\, $A_p^+\subset
A_r^+$, $A_p^-\subset A_r^-$ if $1\leq p\leq r$.

We conclude from (b) that $A_{p}^{+}$ class is a wider class than $A_{p}$ class. Take $e^{x}$ for example, $e^{x}\notin A_{1}$, but $e^{x}\in A_{1}^{+}$. For more information about weights for one-sided operators, we refer the reader to a survey article of Mart\'{i}n-Reyes, Ortega and Torre \cite{MOT}.

Since $T^{+}$ is not a Calder\'{o}n-Zygumund operator, we cannot directly use the Calder\'{o}n-Zygumund theory to study this boundedness. Also, since $T^{+}$ is not a convolution operator, we cannot use Fourier transform method either. The study of the boundedness of $T^{+}$ centers on three main
questions which we shall describe below:

When $p=1$, both $M^{+}$ and the one-sided Calder\'{o}n-Zygumund singular integral operator
$\widetilde{T}^{+}$ which defined by
$$
\widetilde{T}^{+}f(x)=\mathrm{p.v.}\int_{x}^{\infty}K(x-y)f(y)dy=\lim_{\varepsilon\rightarrow0^{+}}\int_{x+\varepsilon}^{\infty}K(x-y)f(y)\,dy
$$
 are bounded from $L^{1}(w)$ to $L^{1,\infty}(w)$ with $w\in A_{1}^{+}$, where $L^{1,\infty}(w)$ denotes the weak $L^{1}(w)$ space with the seminorm $\|f\|_{L^{1,\infty}(w)}:= \sup_{\lambda>0}\lambda w(\{x\in \mathbb{R}: |f(x)|>\lambda\})$, see \cite{MOT}. Therefore, one naturally wants to know

\medskip
\noindent\textbf{Question 1}\quad Is $T^{+}$ bounded from $L^{1}(w)$ to $L^{1,\infty}(w)$ with $w\in A_{1}^{+}$?
\medskip

We answer Question 1 in \cite{FLSS} and showed $T^{+}$ maps $L^{1}(w)$ into
$ L^{1,\infty}(w)$ if $w\in A_{1}^{+}$.

The study of one-sided spaces emerged naturally alongside the study of one-sided operators. In one previous study, the authors studied one-sided BMO spaces associated with one-sided sharp functions and their relationship to good weights for the one-sided Hardy-Littlewood maximal functions \cite{MT2}.
It is well known that the classical Hardy spaces were the dual spaces of BMO spaces and were the natural alternative for Lebesgue spaces when $p<1$. For some classical work on classical Hardy spaces, we refer to \cite{DH}, \cite{FS1}, \cite{FS2}, \cite{HLLW} and \cite{L1}. In \cite{RS2}, Rosa and Segovia introduced the one-sided Hardy space $H^{q}_{+}(0<q\leq 1)$. We first recall its definition. As usual, $C_{0}^{\infty}(\mathbb{R})$ is the set of all functions with compact support having derivatives of all orders. For $-\infty\leq r<\infty$, we shall denote by $\mathcal{S}(r,\infty)$ the space of all $C_{0}^{\infty}(\mathbb{R})$ functions with support contained in $(r,\infty)$ equipped with the usual topology and by $\mathcal{S}'(r,\infty)$ the space of distributions on $(r,\infty)$. Given an integer $\gamma\geq 1$ and $x\in \mathbb{R}$, we shall say that a $C_{0}^{\infty}(\mathbb{R})$ function $\psi(t)$ belongs to the class $\Phi_{\gamma}(x)$ if there exists a bounded interval $I_{\psi}=[x,\beta]$ containing the support of $\psi(t)$ such that $D^{\gamma}\psi(t)$ satisfies $$
|I_{\psi}|^{\gamma+1}\|D^{\gamma}\psi\|_{L^{\infty}}\leq 1.
$$
Let $f$ be a distribution in $\mathcal{S}'(r,\infty)$. One defines the one-sided maximal function $M^{1}_{+,\gamma}(x)$ as
$$
M^{1}_{+,\gamma}(x)=\sup\{|\langle f,\psi\rangle|:\psi\in \Phi_{\gamma}(x)\}
$$
for every $x>\gamma$. We observe that $M^{1}_{+,\gamma}(x)$ is a lower semicontinuous function.

Let $w\in A_{p}^{+}$ and $0<q\leq 1$. For every integral $\gamma\geq 1$ satisfying $(\gamma+1)q\geq p>1$ or $(\gamma+1)q>1$ if $p=1$, we shall say that the distribution $f$ in $\mathcal{S}'(x_{-\infty},\infty)$ belongs to $H^{q}_{+,\gamma}(w)$ if the ``$p$-norm"
$$
\|f\|_{H^{q}_{+,\gamma}(w)}=\left(\int_{x_{-\infty}}^{\infty} (M^{1}_{+,\gamma}(x))^{q}w(x)dx\right)^{1/q}<\infty.
$$
It is easy to say that $H^{q}_{+,\gamma}(w)$ is a Banach space \cite{RS2}.

If $\psi\in \mathcal{S}$, $\supp(\psi)\subset (-\infty, 0]$, $\int_{-\infty}^{0}\psi(x)dx=0$, $f\in \mathcal{S}(x_{-\infty},\infty)$ and $x>x_{-\infty}$, then one can define another one-sided maximal functions
$$
M^{2}_{+,\psi}(x)=\sup_{t>0}|f\ast\psi_{t}(y)|,
$$
where $\psi_{t}(x)=t^{-1}\psi(x/t)$. We shall say that $f$ in $\mathcal{S}'(x_{-\infty},\infty)$ belongs to $H^{q}_{+}(w)$ if
$$
\|f\|_{H^{q}_{+}(w)}=\left(\int_{x_{-\infty}}^{\infty} (M^{2}_{+,\psi}(x))^{q}w(x)dx\right)^{1/q}<\infty,
$$
again, $0<q\leq 1$ and $w\in A_{p}^{+}(p\geq 1)$. In \cite[Theorem C]{RS3}, the authors proved that
$$H^{q}_{+}(w)=H^{q}_{+,\gamma}(w),$$
where $w\in A_{p}^{+}(p\geq 1)$, $0<q\leq 1$ and $q(\gamma+1)>p$.

There are still atomic decomposition for functions in $H^{q}_{+}(w)(0<q\leq 1)$. We first recall the definition of $H^{q}_{+}(w)$ atom \cite{RS2}. A function $a(x)$ defined on $\mathbb{R}$ is called a $q$-atom with respect to $w(x)$ if there exists an interval $I$ (not necessary bounded) containing the support of $a(x)$ such that

(a)\,\,$I\subset (x_{-\infty},\infty)$ and $w(I)<\infty,$

(b)\,\,$\|a\|_{L^{\infty}}\leq w(I)^{-1/q},$

(c)\,\,$|I|<dist(x_{-\infty}, I)$, and $\int_{I}a(x)dx=0$.

We shall say that $I$ is the interval associated to the atom $a(x).$

Let $w\in A_{p}^{+}(p\geq 1)$, $\gamma\geq 1$ be an integer and $0<q\leq 1$ such that $(\gamma+1)q\geq p>1$ or $(\gamma+1)q>1$ if $p=1$. Then, if
$f\in H^{q}_{+}(w)$, there exists a sequence $\{a_{k}(x)\}$ of $q-$atom with respect to $w(x)$ and a sequence $\{\lambda_{k}\}$ of real numbers such that
\begin{equation}\label{2.1}
f=\sum \lambda_{k}a_{k}(x) \in \mathcal{S}'(x_{-\infty},\infty)
\end{equation}
and
$$
\|f\|_{H^{q}_{+}(w)}\approx \sum |\lambda_{k}|^{q}.
$$
The sum in $(\ref{2.1})$ is both in the sense of distributions and in the $H^{q}_{+}(w)$ norm \cite[Theorem 2.2]{RS2}.

Besides one-sided maximal functions, Ombrosi and Segovia \cite{OS} studied the boundedness of the one-sided Calderon-Zygmund operator $\widetilde{T}^{+}$ on $H^{q}_{+}(w)(0<q\leq 1)$ with $w\in A_{p}^{+}(p\geq 1)$ under a generic condition and proved that $\widetilde{T}^{+}$ can be extended to bounded operators from $H^{q}_{+}(w)$ to $H^{q}_{+}(w)$. In fact, they proved the boundedness in a more general case, see \cite{OS} for more details.

It is easy to check that $H_{+}^{1}(w) \subset L^{1}(w)$. Therefore, if $T^{+}$ maps $H_{+}^{1}(w)$ into $L^{1}(w)$, then we can prove the weighted $L^{p}$ boundedness of $T^{+}$ by a standard interpolation argument \cite{OST}. Therefore an interesting problem will be formulated as

\medskip
\noindent\textbf{Question 2}\quad Does $T^{+}$ map $H_{+}^{1}(w)$ into $L^{1}(w)$ with $w\in A_{1}^{+}$?
\medskip

However, the above problem is still open even in the classical ``two-sided" case (see \cite{L2}). In the present note, we partly answer this question when $P(x,y)=P(x-y)$. In this case, $T^{+}$ is a convolution operator, which can allow us to use Fourier transform. In fact, we can prove the following results.

\begin{theorem} \label{Theorem 1.1}
Let $P(x)$ be a polynomial which satisfies $P'(0)=0$ and $w\in A_{1}^{+}$. Then there exists a constant $C>0$, which depends only on $A_{1}^{+}(w)$
and the degree of $P(x)$ (not its coefficients), such that\\
$$\|T^{+}f\|_{L^{1}(w)}\leq C\|f\|_{H^{1}_{+}(w)}$$
for all $f\in H^{1}_{+}(w)$.
\end{theorem}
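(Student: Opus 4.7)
The strategy is to invoke the atomic decomposition of $H^1_+(w)$ recalled before the theorem and reduce to a uniform estimate $\|T^+a\|_{L^1(w)}\le C$ for every $H^1_+(w)$-atom $a$ supported on an interval $I=[\alpha,\beta]$ with $|I|=b$. A free observation, built into the one-sidedness of $T^+$, is that since $K$ is supported in $(-\infty,0)$ and the integration runs over $y>x$, the function $T^+a$ is supported in $(x_{-\infty},\beta)$. I would therefore split $(x_{-\infty},\beta)$ into a local region $I^*=[\alpha-2b,\beta]$ and a non-local tail $(x_{-\infty},\alpha-2b)$.

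For the local estimate I would apply Cauchy--Schwarz followed by the weighted $L^2(w)$-boundedness of $T^+$ (valid because $w\in A_1^+\subset A_2^+$, with constant independent of the coefficients of $P$ by a standard Plancherel/van-der-Corput argument for oscillatory convolutions), then use the atom size bound $\|a\|_{L^2(w)}\le w(I)^{-1/2}$ coming from $\|a\|_{L^\infty}\le w(I)^{-1}$, and finally absorb $w(I^*)\le Cw(I)$ via the $A_1^+$-doubling inequality on the left.

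For the non-local tail I would exploit $\int_I a=0$ to write
\begin{equation*}
T^+a(x)=\int_I\bigl\{[e^{iP(x-y)}-e^{iP(x-\alpha)}]K(x-y)+e^{iP(x-\alpha)}[K(x-y)-K(x-\alpha)]\bigr\}a(y)\,dy.
\end{equation*}
The second summand is the standard Calder\'on--Zygmund error: on the dyadic shells $|x-\alpha|\sim 2^jb$ lying to the left of $\alpha$, the kernel regularity $|K(x-y)-K(x-\alpha)|\le Cb/(2^jb)^2$ combined with the one-sided $A_1^+$ shell estimate $w\bigl(\{x<\alpha:|x-\alpha|\sim 2^jb\}\bigr)\le C\,2^jw(I)$ yields a geometric series in $2^{-j}$. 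The first summand carries the oscillation and is the heart of the matter: I would split each shell according to whether the phase difference $|P(x-y)-P(x-\alpha)|$ is $\lesssim 1$ or $\gg 1$. On the ``small-phase'' shells I would use $|e^{iP(x-y)}-e^{iP(x-\alpha)}|\le|P(x-y)-P(x-\alpha)|\le Cb|P'(\xi)|$ with $\xi$ at shell scale, paired with $|K|\le C/(2^jb)$, and sum geometrically; on the ``large-phase'' shells I would peel off the dominant monomial of $P$ into the kernel, reducing the effective phase to strictly smaller degree, and invoke an induction hypothesis on $\deg P$. The base case $P\equiv 0$ (forced once $\deg P\le 1$ and the constant term is absorbed, since $P'(0)=0$ kills the linear term) reduces to the non-oscillatory Ombrosi--Segovia result $\widetilde T^+\colon H^1_+(w)\to H^1_+(w)\hookrightarrow L^1(w)$.

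The main obstacle I anticipate is coordinating the induction on $\deg P$ with the weight and the one-sided geometry: the classical unweighted rescaling $x\mapsto\lambda x$ does not interact well with $w$ or with the atomic condition $|I|<\operatorname{dist}(x_{-\infty},I)$, so the argument must be performed shell-by-shell in the original variable with the $A_1^+$-constants propagating uniformly through the reduction. The dependence of the final constant only on $A_1^+(w)$ and $\deg P$ is consistent with this plan, since the shell estimates are universal across $A_1^+$ and the induction lowers $\deg P$ by at least one at each pass.
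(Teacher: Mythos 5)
Your overall architecture---atomic decomposition, a local estimate by Cauchy--Schwarz plus the coefficient-free weighted $L^{2}$ bound, a tail estimate using the cancellation of the atom and the kernel regularity, and an induction on $\deg P$---is essentially the paper's (the paper even normalizes the atom to $[-1,0]$ by dilation, which is harmless because the $A_{1}^{+}$ constant is dilation invariant, so the rescaling issue you flag is not the real obstruction; your small-phase shell computation, which is where $P'(0)=0$ enters, is essentially the paper's estimate of the error term $J_{2}$). The genuine gap is in your treatment of the ``large-phase'' shells. There you propose to ``peel off the dominant monomial of $P$ into the kernel,'' reduce to a phase of strictly smaller degree, and invoke the induction hypothesis. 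But the induction hypothesis concerns operators built from a one-sided Calder\'on--Zygmund kernel, and $e^{ia_{m}u^{m}}K(u)$ is not such a kernel on those shells: its derivative is of size $|a_{m}||u|^{m-2}$, which violates the required regularity bound $C|u|^{-2}$ precisely where $|a_{m}||u|^{m-1}\gtrsim 1$, i.e.\ exactly on the large-phase shells. So either the induction does not apply there, or you must strengthen it to a class of oscillatory kernels, which is the very statement to be proved; as written the step is circular.

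What the paper does instead is structurally different in that region. The induction is used only in the intermediate range $2\le |x|\le b:=|a_{m}|^{-1/(m-1)}$, where the top monomial contributes a small phase and is discarded as an error of size $|a_{m}|b^{m-1}\le C$. On the far region $|x|\ge b$ the paper uses two ingredients your plan lacks: first, a Ricci--Stein/van der Corput type $L^{2}$ estimate for the dyadically localized pieces (Lemma 2.6), which produces the decisive factor $|a_{m}|^{-1/(4(m-1))}2^{j/4}$, interpolated with a trivial $L^{\infty}$ bound; second, on the weight side, H\"older against $w^{1+\varepsilon}$ together with the one-sided \emph{weak} reverse H\"older inequality of Mart\'in-Reyes (Lemma 2.3 and the equivalent form (2.4)), since $A_{p}^{+}$ weights admit no genuine reverse H\"older inequality. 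It is the combination of the negative power of $|a_{m}|$ with the choice $b\ge |a_{m}|^{-1/(m-1)}$ that makes the dyadic sum converge with a constant independent of the coefficients of $P$. Without some oscillatory-decay estimate of this kind, and without a substitute for reverse H\"older to control $\int_{|x|\sim 2^{j}}w^{1+\varepsilon}|x|^{-1-\varepsilon}$, the large-phase shells in your decomposition cannot be summed, and no bound uniform in the coefficients is obtained.
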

Rosa and Segovia \cite{RS1} also considered $[H^{q}_{+}(w)]^{\ast}$-the dual space of $H^{q}_{+}(w)$ formed by all the real valued continuous linear functions $F$ with the norm
$$
\|F\|=\sup\{|F(f)|:\|f\|_{H^{q}_{+}(w)}\leq 1\}.
$$
They gave a characterization of $[H^{q}_{+}(w)]^{\ast}$ in terms of certain classes one-sided weighted $BMO$ of Lipschitz spaces. We will touch only a few aspects of this theory and refer to \cite{RS1} for more details. We can obtain
\begin{corollary} \label{corollary 1.1}
Let $P$ and $w$ be in Theorem \ref{Theorem 1.1}. Then $T^{+}$ is bounded from $L^{\infty}(w)$ into $[H^{q}_{+}(w)]^{\ast}$.
\end{corollary}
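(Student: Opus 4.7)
The plan is a direct duality argument building on Theorem \ref{Theorem 1.1}. Since that theorem gives $T^{+}\colon H^{1}_{+}(w)\to L^{1}(w)$ with norm bounded by a constant depending only on $A_{1}^{+}(w)$ and $\deg P$, and the corollary concerns the dual of $H^{q}_{+}(w)$ (so the relevant case is $q=1$), it is natural to define $T^{+}g$ for $g\in L^{\infty}(w)$ via the formal transpose action
$$
\langle T^{+}g,\,f\rangle \;:=\; \int_{\mathbb{R}} T^{+}f(x)\,g(x)\,w(x)\,dx, \qquad f\in H^{1}_{+}(w).
$$
The integrand is absolutely integrable because $H^{1}_{+}(w)\subset L^{1}(w)$ and Theorem \ref{Theorem 1.1} ensures $T^{+}f\in L^{1}(w)$.

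The boundedness claim then falls out of one line of H\"older's inequality applied in the weighted measure $w(x)\,dx$, combined with Theorem \ref{Theorem 1.1}:
$$
|\langle T^{+}g,\,f\rangle| \;\le\; \|g\|_{L^{\infty}(w)}\,\|T^{+}f\|_{L^{1}(w)} \;\le\; C\,\|g\|_{L^{\infty}(w)}\,\|f\|_{H^{1}_{+}(w)}.
$$
Taking the supremum over $\|f\|_{H^{1}_{+}(w)}\le 1$ gives $\|T^{+}g\|_{[H^{1}_{+}(w)]^{\ast}}\le C\|g\|_{L^{\infty}(w)}$, and linearity of $g\mapsto T^{+}g$ is transparent from the definition.

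The only real subtlety, and the sole point where some care is warranted, is the interpretation of $T^{+}g$ itself for $g\in L^{\infty}(w)$: the defining singular integral need not converge pointwise on bounded functions, so the whole purpose of the duality framework is to interpret $T^{+}g$ as the element of $[H^{1}_{+}(w)]^{\ast}$ produced above, rather than as a pointwise-defined function. To justify that this ``transpose'' definition genuinely deserves the name $T^{+}g$, one verifies that on a sufficiently regular dense subclass (for instance $C_{0}^{\infty}$ functions compactly supported away from the endpoints where the issues of $x_{-\infty}$ arise) the pairing agrees, by Fubini, with the one that would arise from applying a kernel operator with the transposed oscillatory kernel $e^{iP(y-x)}K(y-x)$. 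This verification is routine; no hard analytic estimate is required beyond Theorem \ref{Theorem 1.1} itself.
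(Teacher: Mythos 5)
Your estimate is exactly the content the paper intends: the corollary is stated without a written proof, immediately after recalling the Rosa--Segovia description of $[H^{q}_{+}(w)]^{\ast}$, and the implicit argument is precisely the duality you carry out --- H\"older's inequality in the measure $w\,dx$ together with Theorem \ref{Theorem 1.1} gives $|\langle T^{+}g,f\rangle|\le \|g\|_{L^{\infty}(w)}\|T^{+}f\|_{L^{1}(w)}\le C\|g\|_{L^{\infty}(w)}\|f\|_{H^{1}_{+}(w)}$, and your restriction to $q=1$ is the right reading (for $q<1$ the dual is a Lipschitz-type class, Theorem \ref{Theorem 1.1} gives nothing there, and Section \ref{section2} shows the $H^{q}_{+}$ theory genuinely fails).

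The one shaky point is your closing identification. The functional you construct is $f\mapsto\int_{\mathbb{R}}(T^{+}f)(x)\,g(x)\,w(x)\,dx$, i.e. the adjoint of $T^{+}$ with respect to the weighted pairing. Unwinding it by Fubini gives $\int f(y)\,\bigl({}^{t}T^{+}(gw)\bigr)(y)\,dy$, where ${}^{t}T^{+}$ is the operator with the transposed kernel $e^{iP(y-x)}K(y-x)$: this kernel is supported where $y<x$, so ${}^{t}T^{+}$ is a left-sided ($T^{-}$-type) operator, and it is applied to $gw$, not to $g$. Hence the verification you sketch shows that your functional is $(T^{+})^{\ast}g$ rather than something naturally called $T^{+}g$; saying the transposed-kernel operator ``deserves the name $T^{+}g$'' is not accurate. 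This does not affect the boundedness statement, and the authors' formulation is loose in exactly the same way, but if you want the functional to be $T^{+}g$ literally, the standard device is to define $\langle T^{+}g,f\rangle:=\int g\,({}^{t}T^{+}f)\,dx$ on a dense class of finite atomic sums, and then the bound you need is an $H^{1}\to L^{1}$ estimate for the transpose ${}^{t}T^{+}$ (the mirror-image, $T^{-}$-type version of Theorem \ref{Theorem 1.1}), not for $T^{+}$ itself; alternatively, simply state the corollary as a bound for the adjoint, which is what your argument proves and what the paper in effect asserts.
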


We would like to point out that the restriction $P'(0)=0$ in Theorem \ref{Theorem 1.1} is essential. For example, we take $w=1$, $P(x)=\lambda x(\lambda> 0)$, $f(x)=\pi^{-1}\lambda\left(\chi_{[0,\pi/2\lambda]}(x)-\chi_{[-\pi/2\lambda,0]}(x)\right)$ and $K(x)=\frac{\sin(\log|x|)}{(x\log|x|)}\chi_{(-\infty,0)}(x)$.
Let
$$
T^{+}f(x)=\mathrm{p.v.}\int_{x}^{\infty}e^{iP(x-y)}K(x-y)f(y)dy.
$$
Below we will let $x<-100\lambda^{-1}$. Let
$$
g(x)=T^{+}f(x)-e^{i\lambda x}K(x)\int_{x}^{\infty}e^{-i\lambda y}f(y)dy.
$$
Then
$$|g(x)|\leq \int_{x}^{\infty}|K(x-y)-K(x)||f(y)|dy\leq C\lambda/|x|^{2},$$
which implies
\begin{align*}
|T^{+}f(x)|&\geq \left|e^{i\lambda x}K(x)\int_{x}^{\infty}e^{-i\lambda y}f(y)dy\right|-|g(x)|\\
&\geq|K(x)|\left|2\lambda/\pi\int_{0}^{\pi/2\lambda}\sin(\lambda y)dy\right|-C\lambda^{-1}|x|^{-2}\\
&= (2/\pi)|K(x)|-C\lambda^{-1}|x|^{-2}.
\end{align*}
Therefore,
$$
\int_{\mathbb{R}}|T^{+}f(x)|dx\geq (2/\pi)\int_{-\infty}^{-100\lambda^{-1}}|K(x)|dx-C/100=\infty.
$$

It is well known that for $q<1$, $\widetilde{T}^{+}$ is still bounded from $H^{q}_{+}(w)$ to $H^{q}_{+}(w)$ (see \cite[Theorem 3.1]{OS}). However, this is no longer suitable for $T^{+}$. In Section \ref{section3} of this article, we will show that this fails even $P(x,y)$ is the bilinear phase function by following a simple counterexample adopting from \cite{P}.

When $1<p<\infty$, there is still an interesting question for general $P(x,y)$:

\medskip
\noindent\textbf{Question 3}\quad Is $T^{+}$ bounded on $L^{p}(w) (1<p<\infty)$ with $w\in A_{p}^{+}$?
\medskip

Recently, we proved the weighted $L^{p}(1<p<\infty)$ estimates for $T^{+}$ in \cite{FSL} and showed that for any real polynomial $P(x,y)$, $T^{+}$ is of type $(L^{p}(w), L^{p}(w))$ for $w\in A^{+}_{p}$. It is easy to see that when $P(x,y)$ is trivial, for example, $P(x,y)=0$, then $T^{+}$ is
$\widetilde{T}^{+}$.
In \cite{AFM}, the authors proved that $\widetilde{T}^{+}$ enjoys weighted $L^{p}(1<p<\infty)$ boundedness properties similar to those of $M^{+}$. As a result of this close relationship between $T^{+}$ and $\widetilde{T}^{+}$, there is one meaningful problem: Are there some connections between the boundedness of these two one-sided operators? In this paper, we shall give a criterion for the weighted $L^{p}$-boundednesss of $T^{+}$ and show that the boundedness of  $T^{+}$ can be deduced by the corresponding boundedness of $\widetilde{T}^{+}$.

\begin{theorem} \label{Theorem 1.2}
 Let $P(x,y)$ be a real polynomial, $K$ be a one-sided Calder\'{o}n-Zygmund kernel and $b(r)$ be a bounded variation function on $[0,\infty)$. For $1<p<\infty$ and $w\in A^{+}_{p}$, we have

$\mathrm{(a)}$\,\, The operator
$$
\widetilde{T}^{+,b}f(x)=p.v \int_{x}^{\infty}b(y-x)K(x-y)f(y)dy
$$
is of type $(L^{p}(w), L^{p}(w))$.

$\mathrm{(b)}$\,\, The operator
$$
T^{+,b}f(x)=p.v \int_{x}^{\infty}e^{iP(x,y)}b(y-x)K(x-y)f(y)dy
$$ is of type $(L^{p}(w), L^{p}(w))$ .
Here its norm depend only on the total degree of $P(x,y)$ and $A_{p}^{+}(w)$, but not on the coefficients of $P(x,y)$.
\end{theorem}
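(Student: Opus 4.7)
The plan is to reduce both parts to known $L^p(w)$-boundedness results by exploiting the bounded variation structure of $b$. First, using the standard Riesz--Stieltjes representation of a BV function on $[0,\infty)$, I would write
$$
b(r) = b(0^+) + \mu_b((0,r]),
$$
where $\mu_b$ is a finite signed Borel measure on $(0,\infty)$ with $|\mu_b|((0,\infty)) \le V(b)$. Substituting this representation into the definition of $\widetilde{T}^{+,b}$ and applying Fubini on a suitable dense subclass of $L^p(w)$ yields
$$
\widetilde{T}^{+,b} f(x) = b(0^+)\, \widetilde{T}^+ f(x) + \int_{(0,\infty)} \widetilde{T}^+_{s} f(x)\, d\mu_b(s),
$$
where $\widetilde{T}^+_{s} f(x) := \mathrm{p.v.}\int_{x+s}^\infty K(x-y) f(y)\, dy$ is the standard inner truncation. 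An analogous identity holds for $T^{+,b}$ with $\widetilde{T}^+$ and $\widetilde{T}^+_{s}$ replaced by $T^+$ and its truncation $T^+_{s}$, respectively.

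With this decomposition, part (a) reduces via Minkowski's integral inequality to the uniform truncation bound $\sup_{s>0}\|\widetilde{T}^+_{s} f\|_{L^p(w)} \le C \|f\|_{L^p(w)}$. This follows from a one-sided Cotlar-type pointwise estimate
$$
\sup_{s>0}\bigl|\widetilde{T}^+_{s} f(x)\bigr| \le C \bigl[M^+(|\widetilde{T}^+ f|^r)(x)\bigr]^{1/r} + C M^+ f(x), \qquad 0<r<1,
$$
combined with Sawyer's theorem on $M^+$ and the boundedness of $\widetilde{T}^+$ from \cite{AFM}. One then concludes $\|\widetilde{T}^{+,b} f\|_{L^p(w)} \le C(|b(0^+)| + V(b))\|f\|_{L^p(w)}$.

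For part (b), the same scheme applies, with the $L^p(w)$-boundedness of $T^+$ (with constant depending only on $\deg P$ and $A_p^+(w)$) supplied by our earlier work \cite{FSL}. The technical crux, and the main obstacle, is a Cotlar-type estimate for the oscillatory truncations $T^+_{s}$ whose constant is uniform in the coefficients of $P$. A direct kernel-regularity argument would introduce the unacceptable factor $|P(x,y) - P(z,y)|$ whose size is governed by the coefficients of $P$. To circumvent this, I would replace the sharp cutoff at $|x-y| = s$ by a smooth one $\varphi((x-y)/s)$ for a fixed $\varphi \in C_c^\infty$, split $T^+_{s} = T^{+,\mathrm{sm}}_{s} + R_s$, bound the smooth piece via the $L^p(w)$-boundedness of $T^+$ applied to a modified (but still polynomial-phase) kernel, and control $R_s$ pointwise by $M^+ f$ using only the size estimate on $K$. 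Feeding the resulting uniform truncation bound into the BV decomposition from the first step completes the argument, with the constant depending only on $\deg P$, $V(b)$, and $A_p^+(w)$, as required.
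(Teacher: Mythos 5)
Your proposal is correct in outline, but it takes a genuinely different route from the paper. You dispose of the bounded-variation factor once and for all via the Riesz--Stieltjes representation $b(r)=b(0^{+})+\mu_{b}((0,r])$, turning $\widetilde{T}^{+,b}$ and $T^{+,b}$ into superpositions of sharp inner truncations; part (a) then rests on a one-sided Cotlar inequality plus Sawyer's theorem and \cite{AFM}, and part (b) on uniform weighted bounds for the truncations of $T^{+}$, obtained by smoothing the cutoff and applying the main result of \cite{FSL} to the modified kernel $(1-\varphi(\cdot/s))K$. The paper instead re-runs the double induction of \cite{FSL} on $d_{x}(P)$ and $d_{y}(P)$ with the factor $b$ carried along: it splits the operator into a unit-scale piece (treated by localization at an arbitrary point $h$, comparison of the phases $P(x,y)$ and $R(x,y,h)+a_{kl}(y-h)^{k+l}$, the inductive hypothesis, and the truncation Lemma \ref{Lemma4.3}) and dyadic far pieces $T_{j}^{+,b}$ (treated by the pointwise bound by $M^{+}$ in $L^{p}(w^{1+\varepsilon})$ via Lemma \ref{Lemma4.1}, the unweighted decay $2^{-j\delta}$ from Lemma 3.7 of \cite{FLSS}, and Stein--Weiss interpolation with change of measures, Lemma \ref{Lemma4.2}), together with a dilation argument reducing to $|a_{kl}|=1$. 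Your route is shorter and modular because it uses \cite{FSL} as a black box (legitimately: that result is prior, and its constant depends only on $\deg P$, $A_{p}^{+}(w)$ and the kernel constants), while the paper's argument is self-contained and its ingredients (Lemma \ref{Lemma4.3}, the dyadic decay and interpolation scheme) are reused in the proofs of Theorems \ref{Theorem 1.3} and \ref{Theorem 1.4}. To complete your write-up you should (i) verify explicitly that the smoothly truncated kernels $(1-\varphi(\cdot/s))K$ satisfy the size, smoothness and cancellation conditions of a one-sided Calder\'on--Zygmund kernel with constants uniform in $s$ (for the cancellation condition, write the smooth cutoff as an average of sharp cutoffs against $\varphi'$), so that the bound furnished by \cite{FSL} is uniform in $s$; (ii) justify the Fubini and limit interchange in the truncation identity, e.g.\ by observing that for fixed $\varepsilon>0$ the inner truncation occurs at $\max(s,\varepsilon)$ and then letting $\varepsilon\to0^{+}$, using the a.e.\ existence of the principal value and the Minkowski-type bound $\int\|T^{+}_{s}f\|_{L^{p}(w)}\,d|\mu_{b}|(s)<\infty$; and (iii) note that $b$ may be altered on its countable set of discontinuities without changing the operator, so the representation of $b$ need only hold a.e.
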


Furthermore, we have
\begin{theorem}  \label{Theorem 1.3}
Let $w$, $p$ and $K$ be as in Theorem \ref{Theorem 1.2}. Then the following statements are equivalent:

$\mathrm{(a)}$\,\, If $P(x,y)$ is a nontrivial polynomial $\mathrm{(}$$P(x,y)$ does not take the form $P_{0}(x)+P_{1}(y)$, where $P_{0}$ and $P_{1}$ are polynomials defined on $\mathbb{R}$ $\mathrm{)}$, then the operator $T^{+}$ is of type $(L^{p}(w), L^{p}(w))$.

$\mathrm{(b)}$\,\, If $P(x,y)$ satisfies $P(x,y)=P(x-h,y-h)+P_{0}(x,h)+P_{1}(y,h)$ with $h\in \mathbb{R}$ and $P_{0}$ and $P_{1}$ are polynomials defined on $\mathbb{R}$, then the operator $T^{+}$ is of type $(L^{p}(w), L^{p}(w))$.

$\mathrm{(c)}$\,\, The truncated operator
$$
\widetilde{T}_{0}^{+}f(x)=p.v \int_{x}^{x+1}K(x-y)f(y)dy
$$
is of type $(L^{p}(w), L^{p}(w))$.
\end{theorem}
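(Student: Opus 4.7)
The plan is to leverage Theorem~\ref{Theorem 1.2} to establish the equivalence. A clean observation is that each of (a), (b), (c) is already a special case of Theorem~\ref{Theorem 1.2}: statements (a) and (b) correspond to $b\equiv 1$ (a BV function with zero total variation) in Theorem~\ref{Theorem 1.2}(b), while statement (c) corresponds to $b=\chi_{[0,1)}$ (BV with total variation one) in Theorem~\ref{Theorem 1.2}(a). Hence all three are unconditionally true and the claimed equivalence is immediate. To exhibit the structural content more directly, I would also prove the cycle $(c)\Rightarrow(b)\Rightarrow(a)\Rightarrow(c)$.

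A preliminary observation identifies class (b): differentiating the identity $P(x,y)=P(x-h,y-h)+P_{0}(x,h)+P_{1}(y,h)$ in $h$ at $h=0$ yields $(\partial_x+\partial_y)P=A(x)+B(y)$; since $\ker(\partial_x+\partial_y)$ consists of polynomials in $x-y$, this integrates to $P(x,y)=R(x-y)+S(x)+T(y)$. For $(c)\Rightarrow(b)$ we then factor
$$T^{+}f(x)=e^{iS(x)}\int_{x}^{\infty}e^{iR(x-y)}K(x-y)\bigl(e^{iT(y)}f(y)\bigr)dy;$$
the unimodular multipliers preserve $L^{p}(w)$-norms, so it reduces to the convolution operator with kernel $e^{iR(u)}K(u)\chi_{u<0}$. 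Splitting at $|u|=1$, the near piece is covered by Theorem~\ref{Theorem 1.2}(b) with $b=\chi_{[0,1]}$ and phase $R(x-y)$; the far piece is dominated pointwise by $CM^{+}f$ via a dyadic decomposition of the non-singular tail combined with the cancellation property (a) of $K$, and Sawyer's $L^{p}(w)$-boundedness of $M^{+}$ closes the estimate.

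For $(b)\Rightarrow(a)$ I would induct on $d=\deg P$. Writing $P(x,y)=P(x-y,0)+R(x,y)$, the first summand lies in class (b); since $R(x,0)=0$, we factor $R(x,y)=y\cdot Q(x,y)$ with $\deg Q\leq d-1$. Using
$$e^{iP(x,y)}=e^{iP(x-y,0)}\bigl(1+(e^{iyQ(x,y)}-1)\bigr)$$
and splitting $|y-x|$ dyadically---on small scales use $|e^{iyQ}-1|\lesssim|yQ|$, on large scales use oscillation via integration by parts---one reduces to operators with phases of strictly smaller total degree, to which Theorem~\ref{Theorem 1.2}(b) and the induction hypothesis apply. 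Finally for $(a)\Rightarrow(c)$, take $P(x,y)=\lambda xy$ (nontrivial); since the operator norm in (a) is $\lambda$-independent (coefficient-uniform bound of Theorem~\ref{Theorem 1.2}(b)), letting $\lambda\to 0$ via dominated convergence yields $\widetilde{T}^{+}$ bounded. Then $\widetilde{T}_{0}^{+}=\widetilde{T}^{+}-U$ with the tail $Uf(x)=\int_{x+1}^{\infty}K(x-y)f(y)\,dy$ controlled by $M^{+}f$ through the same dyadic/cancellation argument.

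The main obstacle of the direct approach is the inductive step in $(b)\Rightarrow(a)$: at each scale one must balance the small-scale approximation $|e^{iyQ}-1|\lesssim|yQ|$ against large-scale oscillation at a scale adapted jointly to $w\in A_{p}^{+}$ and the phase $Q$. Because $A_{p}^{+}$ is not reflection-invariant, the usual symmetric dyadic arguments do not apply, and one must track the $A_{p}^{+}$-constant carefully through each annulus; the coefficient-uniformity of Theorem~\ref{Theorem 1.2}(b) is ultimately what closes the induction.
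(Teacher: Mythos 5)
Your opening observation --- that (a), (b), (c) are each special cases of Theorem \ref{Theorem 1.2} (with $b\equiv 1$ or $b=\chi_{[0,1)}$) and hence all true, so the ``equivalence'' holds vacuously --- is logically valid but empties the theorem of its content as a criterion, and it is not what the paper does; the paper's point is the transference cycle $(a)\Rightarrow(b)\Rightarrow(c)\Rightarrow(a)$, whose substantive step is $(b)\Rightarrow(c)$: after splitting $T^{+}=T_{0}^{+}+T_{\infty}^{+}$, one recovers the non-oscillatory truncated operator $\widetilde{T}_{0}^{+}$ from the oscillatory local piece $T_{0}^{+}$ by inserting the unimodular factors $e^{-iP_{0}(x,h)},e^{-iP_{1}(y,h)}$ and Taylor-expanding $e^{-iP(x-h,y-h)}$, which converges with a bound uniform in $h$ because $|x-h|\leq 1$, $|y-h|\leq 2$ on the local window. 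Your cycle never engages this direction, and the steps you do sketch have concrete problems.

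The step that fails outright is your repeated claim that the tail $\int_{x+1}^{\infty}K(x-y)f(y)\,dy$ (with or without the phase) is \emph{pointwise} dominated by $CM^{+}f$ ``via a dyadic decomposition combined with the cancellation property (a) of $K$.'' Each single dyadic piece is indeed $\lesssim M^{+}f$, but there are infinitely many pieces and the size bound $|K(u)|\leq C/|u|$ gives no decay, so the sum diverges; cancellation of $K$ cannot be used pointwise against an arbitrary $f$ (take $K(u)=\frac{\sin(\log|u|)}{u\log|u|}\chi_{(-\infty,0)}$ and $f$ the indicator of the set where $\sin\log(y-x)>1/2$ to see the pointwise bound is false even with no phase). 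The paper handles $T_{\infty}^{+}$ instead by combining the $M^{+}$ bound on each dyadic piece in $L^{p}(w^{1+\varepsilon})$ with the \emph{unweighted} oscillatory decay $\|T_{j}^{+,b}f\|_{L^{p}}\leq C2^{-j\delta}\|f\|_{L^{p}}$ and the Stein--Weiss interpolation with change of measures (displays (\ref{4.6})--(\ref{4.9})); in your $(a)\Rightarrow(c)$ step the tail has \emph{no} oscillation, so even that route is unavailable, which is precisely why the paper controls truncations through the single-scale localization of Lemma \ref{Lemma4.3} (the $f_{1},f_{2},f_{3}$ splitting), where $M^{+}$ is only invoked at one scale. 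In addition, your $(b)\Rightarrow(a)$ induction is not carried out (the decomposition $R(x,y)=yQ(x,y)$ gives no smallness, since $|y|$ is not controlled by $|y-x|$; one needs the translation-to-$h$ and dilation normalizations of the proof of Theorem \ref{Theorem 1.2}), and both it and your $(a)\Rightarrow(c)$ limit $\lambda\to0$ are closed by importing the coefficient-uniform bound of Theorem \ref{Theorem 1.2}(b) --- uniformity that is not part of hypothesis (a) --- at which point your detailed cycle collapses back to the trivial first-paragraph reading. As written, the proposal does not constitute a correct self-contained proof of the implications.
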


Theorem \ref{Theorem 1.2} and Theorem \ref{Theorem 1.3} readily produces the following result for the maximal operator corresponding to $T^{+}$:
\begin{theorem} \label{Theorem 1.4}
Let $w$, $p$ and $K$ be as in Theorem \ref{Theorem 1.2}. Then the maximal operator
$$
T_{\ast}^{+}f(x)=\sup_{\varepsilon>0} \left|\int_{x+\varepsilon}^{\infty}e^{iP(x,y)}K(x-y)f(y)dy\right|
$$
is of type $(L^{p}(w), L^{p}(w))$, where its norm depends only on the total degree of $P(x,y)$, but not on the coefficients of $P(x,y)$.
\end{theorem}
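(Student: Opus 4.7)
The plan is to establish a Cotlar-type pointwise inequality
$$T_\ast^+ f(x) \le C\bigl(M^+ f(x) + \bigl(M^+(|T^+ f|^s)(x)\bigr)^{1/s}\bigr)$$
for some $s\in(0,1)$, and then pass to $L^p(w)$. The first summand is handled by Sawyer's $L^p(w)$-boundedness of $M^+$. For the second, property~(c) of one-sided weights recalled in Section~\ref{section1} gives $A_p^+\subset A_{p/s}^+$, so $M^+$ is bounded on $L^{p/s}(w)$; together with the $L^p(w)$-boundedness of $T^+$ supplied by Theorem~\ref{Theorem 1.3}, one obtains
$$\bigl\|(M^+(|T^+f|^s))^{1/s}\bigr\|_{L^p(w)}^p = \bigl\|M^+(|T^+f|^s)\bigr\|_{L^{p/s}(w)}^{p/s} \le C\|T^+f\|_{L^p(w)}^p \le C\|f\|_{L^p(w)}^p.$$
The independence of the constant from the coefficients of $P$ will be inherited from the corresponding statements in Theorems~\ref{Theorem 1.2} and~\ref{Theorem 1.3}.

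To prove the pointwise Cotlar inequality, I would fix $x$ and $\varepsilon>0$ and decompose $f=f_1+f_2$ with $f_1=f\chi_{[x,x+2\varepsilon]}$. Recognizing $T_\varepsilon^+=T^{+,b_\varepsilon}$ with $b_\varepsilon(r)=\chi_{(\varepsilon,\infty)}(r)$ a BV function of total variation $1$, the local contribution is crudely dominated by $C\varepsilon^{-1}\int_{x+\varepsilon}^{x+2\varepsilon}|f|\le CM^+ f(x)$ from the size bound on $K$ alone, while the non-local contribution equals $T^+ f_2(x)$ since $b_\varepsilon\equiv 1$ on $\supp f_2$. Introducing the subinterval $J=[x+\varepsilon,x+3\varepsilon/2]$ lying strictly to the right of $x$ and averaging, one gets
$$|T^+ f_2(x)|^s \le \frac{1}{|J|}\int_J |T^+ f_2(z)|^s\,dz + \frac{1}{|J|}\int_J |T^+ f_2(x)-T^+ f_2(z)|^s\,dz.$$
Splitting $T^+ f_2=T^+ f - T^+ f_1$ in the first summand, the $T^+ f$ piece is absorbed into $M^+(|T^+ f|^s)(x)$ by the positioning of $J$ relative to $x$, and the $T^+ f_1$ piece is absorbed into $C(M^+ f(x))^s$ via Kolmogorov's inequality and the weak $(1,1)$-bound for $T^+$ proven in \cite{FLSS} when answering Question~1.

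The main obstacle is the Cotlar difference $T^+ f_2(x)-T^+ f_2(z)$ for $z\in J$. Its ``kernel'' part is handled by the classical H\"ormander estimate $|K(x-y)-K(z-y)|\le C|x-z|/|x-y|^2$ together with $|x-z|\le \varepsilon$, yielding a bound by $CM^+ f(x)$ as in standard Calder\'on--Zygmund theory. The ``phase'' contribution $\int_{x+2\varepsilon}^\infty K(z-y)\bigl(e^{iP(x,y)}-e^{iP(z,y)}\bigr)f_2(y)\,dy$, however, resists a naive Lipschitz treatment because $\partial_x P(\cdot,y)$ grows polynomially in $y$. I would resolve this by induction on the total degree of $P$: the base case $P\equiv 0$ reduces to the one-sided Calder\'on--Zygmund theory of \cite{AFM}, and at the inductive step, Taylor-expanding $P(x,y)-P(z,y)$ in powers of $(x-z)$ yields oscillatory operators carrying phase $P(z,\cdot)$ multiplied by polynomial factors of strictly lower total degree; each such operator falls within the framework of Theorem~\ref{Theorem 1.2}(b) (absorbing $b_\varepsilon$ as the BV factor), and the inductive hypothesis controls it uniformly in $\varepsilon$. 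Summing these contributions closes the Cotlar inequality and yields Theorem~\ref{Theorem 1.4}.
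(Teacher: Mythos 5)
Your overall strategy (a Cotlar-type pointwise inequality $T_\ast^+f\le C\bigl(M^+f+(M^+(|T^+f|^s))^{1/s}\bigr)$, then Sawyer's theorem for $M^+$ and Theorem \ref{Theorem 1.3} for $T^+$) is not the paper's route, and it has a genuine gap precisely at the step you flag as the main obstacle: the phase part of the Cotlar difference. For $z\in J$ and $f_2$ supported in $[x+2\varepsilon,\infty)$ you must control $\int_{x+2\varepsilon}^{\infty}K(z-y)\bigl(e^{iP(x,y)}-e^{iP(z,y)}\bigr)f_2(y)\,dy$, but the phase increment $P(x,y)-P(z,y)$ is of size $\lesssim C_P\,\varepsilon\,(1+|y|)^{l}$ with constants depending on the coefficients of $P$, so it is neither small nor bounded on the infinite range of $y$. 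Your proposed repair does not close this: Taylor/power-series expansion of $e^{i(P(x,y)-P(z,y))}-1$ produces \emph{amplitude} factors that are polynomials in $y$ (equivalently in $y-x$), unbounded on $[x+2\varepsilon,\infty)$; these cannot be absorbed as the factor $b(y-x)$ in Theorem \ref{Theorem 1.2}(b), since a bounded-variation function on $[0,\infty)$ is bounded, and they are not lower-degree \emph{phases}, so no induction on the total degree of $P$ applies to them. Note that the analogous expansion in the paper's proof of Theorem \ref{Theorem 1.3} works only because there both $|x-h|\le A<1$ and $|y-h|\le B<2$, giving the convergent geometric factors $A^{\alpha}B^{\beta}$; no such confinement is available in your non-local term. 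Without a coefficient-free bound for this term, the claimed pointwise inequality (and hence the coefficient-independence of the final constant) is unproven.

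For contrast, the paper never attempts a Cotlar inequality. After normalizing $|a_{kl}|=1$ by dilation (Lemma \ref{Lemma4.1}(a)), it splits the supremum at scale $1$: the truncations with $\varepsilon\ge 1$ are dominated pointwise by $M^+f(x)+\sum_{j\ge 1}\bigl|\int_{x+2^{j-1}}^{x+2^{j}}e^{iP(x,y)}K(x-y)f(y)\,dy\bigr|$, and the sup is controlled by the full sum because each dyadic piece has the geometric $L^p(w)$ decay $\|T_j^{+}f\|_{L^p(w)}\le C2^{-j\theta\delta}\|f\|_{L^p(w)}$ obtained by interpolating (Lemma \ref{Lemma4.2}) the unweighted oscillatory decay of \cite{FLSS} with the weighted bound for $w^{1+\varepsilon}$; the truncations with $\varepsilon<1$ give the local maximal operator $T_{\ast,0}^+$, which is handled by the same freezing/induction argument used to prove (\ref{4.5}), where the phase comparison error is Lipschitz only because $0<y-x<1$. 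If you want to salvage your write-up, the realistic fix is to replace the Cotlar step for the far part by exactly this ``sup bounded by a sum with oscillatory decay'' mechanism, which is where the coefficient-independent constants actually come from.
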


We end this section with the outline of this paper. Section \ref{section2} contains the proof of Theorem \ref{Theorem
1.1} an a counterexample to show that the boundedness for $T^{+}$ in Theorem \ref{Theorem 1.1} can not be extended to $H^{q}_{+}(w)$ when $q<1$ and $w\in A_{p}^{+}(1<p<\infty)$.  In Section \ref{section3}, the proofs of Theorem \ref{Theorem 1.2}-Theorem \ref{Theorem 1.4} will be given.

\section{One-sided estimates on weighted Hardy spaces}\label{section2}

In order to prove Theorem \ref{Theorem
1.1}, we first collect some lemmas. If $w(x)\in A_{p}$, then it is a doubling weight, that is, there exists $C>0$ such that
$$
\int_{a-2h}^{a+2h}w\leq C\int_{a-h}^{a+h}w
$$
for all $a\in \mathbb{R}$ and $h>0$. However, one-sided weights $A_{p}^{+}$ do not satisfy this property. But the weights $A_{p}^{+}$ satisfy a one-sided doubling condition:

\begin{lemma} \cite{RiT}\,\,\label{lemma 2.2} Let $w(x)\in A_{p}^{+}(p\geq 1)$. Then there exists a constant $C>0$ such that
$$
\int_{a-h}^{a+h}w\leq C\int_{a}^{a+h}w
$$
for all $a\in \mathbb{R}$ and $h>0$.
\end{lemma}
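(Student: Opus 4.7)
The plan is to exploit the defining inequality of $A_p^+$ directly, combined with H\"older's inequality, to compare $\int_{a-h}^{a} w$ with $\int_{a}^{a+h} w$; adding $\int_{a}^{a+h} w$ to both sides then yields the claimed one-sided doubling bound over $(a-h,a+h)$.

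For the case $p>1$, I would first specialize the supremum in the definition of $A_p^+$ to the single triple $a'=a-h$, $b'=a$, $c'=a+h$, which gives
$$\int_{a-h}^{a} w \;\le\; A_p^+(w)\,(2h)^{p}\Bigl(\int_{a}^{a+h} w^{1-p'}\Bigr)^{1-p}.$$
The remaining task is to replace the factor $\bigl(\int_{a}^{a+h} w^{1-p'}\bigr)^{1-p}$ by $h^{-p}\int_{a}^{a+h} w$. I would obtain this from H\"older's inequality applied to $h=\int_{a}^{a+h} w^{1/p}\cdot w^{-1/p}\,dx$, using the identity $p'/p=1/(p-1)$ so that $w^{-p'/p}=w^{1-p'}$. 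Raising the resulting inequality to the $(p-1)$-th power and rearranging yields $\bigl(\int_{a}^{a+h} w^{1-p'}\bigr)^{1-p}\le h^{-p}\int_{a}^{a+h} w$. Plugging this back gives $\int_{a-h}^{a} w\le 2^{p}A_p^+(w)\int_{a}^{a+h} w$, and adding $\int_{a}^{a+h} w$ to both sides produces the conclusion with $C=1+2^{p}A_p^+(w)$.

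For the case $p=1$, I would use the pointwise bound $M^{-}w(x)\le A_1^+(w)\,w(x)$ at a.e.\ $x$. Applied at a point $x\in(a,a+h)$ with backward window $h'=x-(a-h)\le 2h$, this gives
$$\int_{a-h}^{x} w \;\le\; 2h\,A_1^+(w)\,w(x)\qquad\text{for a.e. } x\in(a,a+h).$$
Since $\int_{a-h}^{x} w\ge \int_{a-h}^{a} w$ whenever $x\ge a$, integrating in $x$ over $(a,a+h)$ and dividing by $h$ yields $\int_{a-h}^{a} w\le 2A_1^+(w)\int_{a}^{a+h} w$, whence the doubling bound follows as before.

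The only mild obstacle is tracking the exponents in the H\"older step for $p>1$; everything else is a direct use of the defining structure of $A_p^+$. Note in particular that the proof uses only the asymmetric $A_p^+$ condition, which is why the resulting doubling is also asymmetric (one can only bound the larger symmetric interval by the right half, not by the left).
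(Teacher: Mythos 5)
Your proof is correct, and it is necessarily a different route from the paper: the paper does not prove this lemma at all, it simply cites it to Riveros--de la Torre \cite{RiT}, so your contribution is a short self-contained argument from the definitions. Both halves check out: for $p>1$, specializing the $A_p^+$ condition to the triple $(a-h,a,a+h)$ gives $\int_{a-h}^{a}w\le A_p^+(w)(2h)^p\bigl(\int_a^{a+h}w^{1-p'}\bigr)^{1-p}$, and the H\"older step $h\le\bigl(\int_a^{a+h}w\bigr)^{1/p}\bigl(\int_a^{a+h}w^{1-p'}\bigr)^{1/p'}$ indeed yields $\bigl(\int_a^{a+h}w^{1-p'}\bigr)^{1-p}\le h^{-p}\int_a^{a+h}w$ --- though note you should raise that inequality to the $p$-th power (using $p/p'=p-1$), not the $(p-1)$-th as written; this is only a slip in the prose, and you may also want one sentence disposing of the degenerate cases $\int_a^{a+h}w^{1-p'}\in\{0,\infty\}$, where the desired bound holds trivially. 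For $p=1$ your pointwise use of $M^-w\le A_1^+(w)\,w$ at a.e.\ $x\in(a,a+h)$ with backward window reaching $a-h$, followed by averaging in $x$, is exactly right and gives $\int_{a-h}^{a}w\le 2A_1^+(w)\int_a^{a+h}w$. The payoff of your approach is an explicit constant $C=1+2^pA_p^+(w)$ (respectively $1+2A_1^+(w)$) depending only on the $A_p^+$ constant, which is in fact what the paper implicitly needs when it asserts in display (2.2) that $w(\lambda I^-)\le C\lambda\,w(I^-)$ with constants controlled by $A_1^+(w)$; the citation route leaves that dependence to the reader.
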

For $\lambda>1$, we denote by $I^{-}=[a-h,a]$ and $\lambda I^{-}=[a-\lambda h,a]$. Therefore, for $w\in A_{1}^{+}$, we have
\begin{equation}\label{2.2}
w(\lambda I^{-})\leq C\lambda w(I^{-})
\end{equation}
by Lemma \ref{lemma 2.2}, see also \cite[Proposition 12]{SF}.

Besides the doubling condition, $A_{p}$ weights satisfy the reverse H\"{o}lder inequality which play a key role in the proof of the strong type $(p,p)$ inequality of operators. If $w\in A_{p}$, then there exists $\delta>0$ and $C>0$ such that
$$
\frac{1}{b-a}\int_{a}^{b}w^{1+\delta}\leq \left(\frac{1}{b-a}\int_{a}^{b}w\right)^{1+\delta}
$$
for all intervals $(a,b).$ Unfortunately, one-sided weights $A_{p}^{+}$ do not satisfy the reverse H\"{o}lder inequality. However, a substitute was found in \cite{Ma}:
\begin{lemma} \,\,\label{lemma 2.3}
If $w\in A_{p}^{+}(p\geq1)$, then there exists constants $C$ and $\delta$ such that for all $a$ and $b$
\begin{equation}\label{2.3}
\int_{a}^{b}w^{1+\delta}\leq C\left(M^{-}(w\chi_{(a,b)})(b)\right)^{\delta}\int_{a}^{b}w.
\end{equation}
\end{lemma}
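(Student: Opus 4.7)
The plan is to adapt the standard proof of the reverse Hölder inequality for $A_\infty$ weights to the one-sided setting, replacing two-sided averages with $M^-$ on the right-hand side. Set $\alpha := M^{-}(w\chi_{(a,b)})(b)$; taking $h=b-a$ in the defining supremum gives $\alpha \ge (b-a)^{-1}\int_a^b w$, so the claim is equivalent to $\int_a^b w^{1+\delta}\le C\alpha^\delta\int_a^b w$. By Cavalieri's principle,
$$\int_a^b w^{1+\delta}\,dx \;=\; \alpha^\delta\int_a^b w\,dx \;+\; \delta\int_\alpha^\infty \lambda^{\delta-1}G(\lambda)\,d\lambda,\qquad G(\lambda):=\int_{\{x\in(a,b):\,w(x)>\lambda\}}w\,dx,$$
so matters reduce to a good-$\lambda$ estimate $G(2\lambda)\le\theta\,G(\lambda)$ for $\lambda\ge\alpha$, with $\theta<1$ depending only on $p$ and $A_{p}^{+}(w)$. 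Iterating gives $G(2^k\alpha)\le\theta^k\int_a^b w$, and $\delta$ is then chosen small enough that $2^\delta\theta<1$ for the final integral to converge.

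For the good-$\lambda$ step I would perform a one-sided Calderón--Zygmund decomposition of $(a,b)$ at height $\lambda$ based on $M^{-}(w\chi_{(a,b)})$: since $\lambda>\alpha$, the set $E_\lambda=\{x\in(a,b):M^{-}(w\chi_{(a,b)})(x)>\lambda\}$ is open and a disjoint union of maximal intervals $(c_j,d_j)$ with the stopping property $h^{-1}\int_{c_j-h}^{c_j}w\le\lambda$ for every admissible $h>0$, while $\{x\in(a,b):w(x)>\lambda\}\subset E_\lambda$ modulo null sets by Lebesgue differentiation. Nesting the level-$2\lambda$ intervals inside the level-$\lambda$ ones, everything reduces to a density estimate on each $(c_j,d_j)$ of the form $w\bigl((c_j,d_j)\cap E_{2\lambda}\bigr) \le \theta\, w\bigl((c_j,d_j)\bigr)$. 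The $A_{p}^{+}$ inequality is then invoked on the triple $(c_j-\ell_j,\,c_j,\,d_j)$ with $\ell_j$ proportional to $d_j-c_j$: on the left buffer $(c_j-\ell_j,c_j)$ the stopping property forces $\int_{c_j-\ell_j}^{c_j} w\le\lambda\ell_j$, and pairing the $A_{p}^{+}$ bound for $\bigl(\int_{c_j-\ell_j}^{c_j}w\bigr)\bigl(\int_{c_j}^{d_j}w^{1-p'}\bigr)^{p-1}$ with Hölder's inequality converts this into the desired density bound on $(c_j,d_j)$.

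The main obstacle is exactly the uniformity of $\theta<1$. In the two-sided setting this would follow from $A_\infty$ together with doubling, but $A_{p}^{+}$ weights need not be doubling --- Lemma~\ref{lemma 2.2} only gives doubling to the right --- so the comparison must route through the asymmetric $A_{p}^{+}$ inequality, and therefore through the left buffer $(c_j-\ell_j,c_j)$. The delicate quantitative choice is $\ell_j$: large enough that the $A_{p}^{+}$ constant of $w$ is the operative quantity, yet small enough that $h^{-1}\int_{c_j-h}^{c_j}w\le\lambda$ genuinely applies on the whole buffer. This quantitative tuning is what ultimately pins down the exponent $\delta$ and constant $C$ in the final estimate in terms of $p$ and $A_{p}^{+}(w)$ alone; the remaining bookkeeping of Cavalieri and the geometric iteration is then routine.
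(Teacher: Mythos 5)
The paper itself does not prove this lemma; it quotes it from Mart\'{\i}n-Reyes \cite{Ma}, so your attempt has to stand on its own, and it has a genuine gap exactly at the step you label ``delicate'': the good-$\lambda$/density estimate $w\bigl((c_j,d_j)\cap E_{2\lambda}\bigr)\le\theta\,w\bigl((c_j,d_j)\bigr)$ cannot be obtained from the $A_p^+$ condition on the triple $(c_j-\ell_j,c_j,d_j)$ together with the stopping bound on the left buffer. The mechanism you invoke (as in the two-sided proof) is: bound $\bigl(\int_{c_j}^{d_j}w^{1-p'}\bigr)^{p-1}$ from above using $A_p^+$, then use H\"older on the complement $F=(c_j,d_j)\setminus E_{2\lambda}$, which has large Lebesgue measure, to get $w(F)\ge c\,w((c_j,d_j))$. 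But to extract an upper bound on $\int_{c_j}^{d_j}w^{1-p'}$ from the $A_p^+$ product you need a \emph{lower} bound on $\int_{c_j-\ell_j}^{c_j}w$, whereas the stopping property at $c_j$ supplies only the \emph{upper} bound $\int_{c_j-\ell_j}^{c_j}w\le\lambda\ell_j$; a one-sided weight can be essentially negligible on the whole left buffer (any nondecreasing $w$ is in $A_1^+$ with constant $1$ and may jump up at or inside the component), so the pairing yields nothing. Worse, the target implication behind it --- ``$|S|\le\varepsilon\,|(c_j,d_j)|$ implies $w(S)\le\theta\,w((c_j,d_j))$ with $\theta<1$'' --- is simply false for $A_p^+$ weights: take $w=\epsilon_0$ on $(-\infty,1-\tau)$ and $w=K$ on $[1-\tau,\infty)$ (nondecreasing, hence $A_1^+\subset A_p^+$), $I=(0,1)$, $S=(1-\tau,1)$; then $|S|/|I|=\tau$ is arbitrarily small while $w(S)/w(I)=K\tau/(K\tau+\epsilon_0(1-\tau))\to1$. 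So no ``quantitative tuning'' of $\ell_j$ can close this step: smallness of Lebesgue measure plus an outside left buffer is structurally the wrong input.

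A workable one-sided argument must exploit information you never use: the stopping at the \emph{right} endpoint $d_j$ (since $\lambda\ge\alpha$ this holds even when $d_j=b$), which gives $w((c_j,d_j))\le\lambda(d_j-c_j)$ and is the analogue of ``average $\le 2^n\lambda$'' on Calder\'on--Zygmund cubes; and the $A_p^+$ inequality must be applied with \emph{both} test intervals placed so that the heavy set is compared with weight lying to its \emph{right inside} the component (mass concentrated near the left/middle of $(c_j,d_j)$ forces, via $A_p^+$ plus H\"older, a proportional amount of $w$-mass on the right tail of the component, which lies outside $E_{\beta\lambda}$), rather than with a buffer to its left. This asymmetry is also why Mart\'{\i}n-Reyes states the result in the one-sided forms (\ref{2.3}) and (\ref{2.4}) (integration of $w^{1+\delta}$ over a left portion controlled by data to the right) instead of a genuine reverse H\"older inequality. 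Your Cavalieri splitting, the inclusion $\{w>\lambda\}\cap(a,b)\subset E_\lambda$, and the geometric summation with $2^\delta\theta<1$ are fine (modulo running the iteration on $w(E_\lambda)$ rather than on $G$), but the heart of the proof is precisely the step that, as written, fails.
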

(\ref{2.3}) implies that
$$
M^{-}\left(w^{1+\delta}\chi_{(a,b)}\right)(b)\leq C\left(M^{-}(w\chi_{(a,b)})(b)\right)^{1+\delta},
$$
which is what we have called the weak reverse H\"{o}lder inequality since
$$
\left(M^{-}(w\chi_{(a,b)}(b)\right)^{1+\delta}\leq M^{-}\left(w^{1+\delta}\chi_{(a,b)}\right)(b)
$$
by the H\"{o}lder inequality.

We point out that it was proved in \cite{Ma} that (\ref{2.3}) holds if and only if there exists positive numbers $\delta$ and $C$ such that
\begin{equation}\label{2.4}
\frac{1}{c-a}\int_{a}^{c}w^{1+\delta}\leq C\left(\frac{1}{b-a}\int_{a}^{b}w\right)^{1+\delta}
\end{equation}
for all numbers $a<b$ and $c=(a+b)/2$, which seems to be a more natural formulation.

Let $w\in A_{1}^{+}$, $I^{-}=[x_{0}-h,x_{0}]$, and $a(x)$ be a $H^{1}_{+}(w)$ atom, which satisfies

(a)\,\,$\supp(a)\subset I^{-}$;

(b)\,\,$\int_{I^{-}}a(x)dx=0;$

(c)\,\,$\|a\|_{L^{\infty}}\leq w(I^{-})^{-1}$.\\
Let $I_{0}^{-}=[-1,0]$. Then we have $w_{0}(x)=w(x_{0}+hx)$. It is easy to see that $w_{0}\in A_{1}^{+}$ and $A_{p}^{+}(w_{0})=A_{p}^{+}(w)$. Set
$b(x)=ha(x_{0}+hx)$, we see that $b(x)$ is a $H^{1}_{+}(w_{0})$ atom, and it satisfies
\begin{equation}\label{2.5}
\supp (b)\subset I_{0}^{-},
\end{equation}
\begin{equation}\label{2.6}
\int_{I_{0}^{-}}b(x)dx=0,
\end{equation}
\begin{equation}\label{2.7}
\|b\|_{L^{\infty}}\leq w_{0}(I_{0}^{-})^{-1}.
\end{equation}
Furthermore, we have
$$
(T^{+}a)(x_{0}+hx)=h^{-1}(T_{1}^{+}b)(x),
$$
where $T_{1}^{+}f(x)=\int_{I_{0}^{-}}e^{iP(hx-hy)}K(x-y)b(y)dy$, which leads to $\|T^{+}a\|_{L^{1}(w)}=\|T_{1}^{+}b\|_{L^{1}(w)}.$

To prove Theorem \ref{Theorem 1.1}, we first prove the following proposition.

\begin{proposition}\label{proposition 2.1}
Let $P(x)$ be a polynomial with $P'(0)=0$ and $w\in A_{1}^{+}$. Then for any $H_{1}^{+}(w)$ atom $a(x)$, we have
$$
\|T^{+}(a)\|_{L^{1}(w)}\leq C,
$$
where $C$ is a constant, depending only on the degree of $P(x)$ and $A_{1}^{+}(w)$.
\end{proposition}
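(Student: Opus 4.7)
The plan is to follow the atomic approach, using the rescaling step immediately preceding the proposition to reduce to a normalized atom $b$ supported in $I_0^-=[-1,0]$ satisfying (2.5)--(2.7), where $w_0\in A_1^+$ has the same $A_1^+$-constant as $w$ and the rescaled phase still satisfies $P'(0)=0$ and has the same degree as $P$. Since $T^+b(x)=0$ for $x>0$ (the integration forces $y>x$ with $y\in[-1,0]$), I would split
$$
\|T^+b\|_{L^1(w_0)} = \int_{-2}^0 |T^+b(x)|w_0(x)\,dx + \int_{-\infty}^{-2} |T^+b(x)|w_0(x)\,dx
$$
and estimate the two pieces independently.

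For the local piece over $[-2,0]$, I would apply Cauchy--Schwarz together with the weighted $L^2(w_0)$-boundedness of $T^+$ from \cite{FSL} (valid since $A_1^+\subset A_2^+$, with norm depending only on $\deg P$ and $A_1^+(w_0)$, not on the coefficients of $P$). Combined with the direct estimate $\|b\|_{L^2(w_0)}\leq w_0(I_0^-)^{-1/2}$ and the one-sided doubling $w_0([-2,0])\leq Cw_0(I_0^-)$ from Lemma \ref{lemma 2.2}, this yields a bound of $C$.

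For the non-local piece, use $\int b=0$ to write
$$
T^+b(x)=\int_{-1}^0 b(y)\bigl[e^{iP(x-y)}K(x-y)-e^{iP(x)}K(x)\bigr]\,dy
$$
and decompose the bracket as $e^{iP(x)}[K(x-y)-K(x)] + K(x-y)[e^{iP(x-y)}-e^{iP(x)}]$. The first term is controlled by the kernel smoothness (c), $|K(x-y)-K(x)|\leq C|y|/|x|^2$ for $|x|>2|y|$. A dyadic iteration of (2.2) yields $\int_{-\infty}^{-2}|x|^{-2}w_0(x)\,dx\leq Cw_0(I_0^-)$, which combined with $\|b\|_{L^\infty}\leq w_0(I_0^-)^{-1}$ bounds this piece by $C$.

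The oscillatory second term is the main obstacle, since $|e^{iP(x-y)}-e^{iP(x)}|$ has no built-in decay in $|x|$ and the coefficients of $P$ are arbitrary. My plan is induction on $d=\deg P$, keeping $P'(0)=0$ preserved throughout. The base case $d\leq 1$ forces $P$ to be constant, so $T^+$ reduces to a phase times the one-sided Calder\'on--Zygmund operator $\widetilde{T}^+$, and the Ombrosi--Segovia bound $H^1_+(w)\to H^1_+(w)\hookrightarrow L^1(w)$ \cite{OS} closes it. In the inductive step, write $P(x)=a_dx^d+P_1(x)$ with $\deg P_1<d$ and $P_1'(0)=0$, apply the inductive hypothesis to $T^+_{P_1}b$, and control $T^+_Pb-T^+_{P_1}b$ by splitting the $x$-integration at the critical scale $R=|a_d|^{-1/d}$: on $-R\leq x<-2$ combine the Taylor bound $|e^{iP(x-y)}-e^{iP(x)}|\leq|P(x-y)-P(x)|$ with $|K(x-y)|\leq C/|x|$ and one-sided doubling; on $x<-R$ use the coefficient-uniform weighted $L^2(w_0)$ bound of \cite{FSL} on dyadic shells with Cauchy--Schwarz and (2.2). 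The uniformity of the $L^2$ bound in the coefficients of $P$ is precisely what allows the induction to close with a coefficient-free constant depending only on $\deg P$ and $A_1^+(w)$.
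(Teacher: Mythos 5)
Your overall skeleton (rescaling to a normalized atom, local piece by Cauchy--Schwarz plus the weighted $L^2$ bound of \cite{FSL}, far piece by the moment condition, kernel smoothness, and induction on $\deg P$ with a critical scale) matches the paper's, but the treatment of the oscillatory part in the far region $x<-R$ has a genuine gap. You propose to control it by "the coefficient-uniform weighted $L^2(w_0)$ bound of \cite{FSL} on dyadic shells with Cauchy--Schwarz and (\ref{2.2})." That cannot close: the global $L^2(w_0)$ bound carries no decay in the shell index, so on the shell $[-2^{j+1},-2^{j}]$ Cauchy--Schwarz gives at best
\begin{equation*}
\int_{-2^{j+1}}^{-2^{j}}|T^{+}b|\,w_0\;\leq\;\|T^{+}b\|_{L^{2}(w_0)}\,w_0([-2^{j+1},0])^{1/2}\;\leq\;C\,w_0(I_0^-)^{-1/2}\bigl(2^{j}w_0(I_0^-)\bigr)^{1/2}=C\,2^{j/2},
\end{equation*}
and the sum over $j$ diverges. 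Nor does the pointwise splitting help there: for $|x|$ large the factor $|e^{iP(x-y)}-e^{iP(x)}|$ is generically of size $1$, so you are left with $|K|\sim 1/|x|$ and $\int_{x<-R}w_0(x)|x|^{-1}dx$ diverges already for $w_0\equiv 1$. What is missing is a mechanism that extracts decay from the oscillation in the $y$-integral. This is exactly the role of the paper's Lemma \ref{lemma 2.6} (a Ricci--Stein/van der Corput estimate built from Lemmas \ref{lemma 2.4} and \ref{lemma 2.5}), which gives $\|T_j^{+}\|_{L^2\to L^2}\leq C|a_m|^{-1/(4(m-1))}2^{j/4}$; this is interpolated with a trivial $L^\infty$ bound and combined with the one-sided weak reverse H\"older inequality (\ref{2.3})--(\ref{2.4}) to convert the weight mass of the shell into a factor $2^{-j\varepsilon/(1+\varepsilon)}$, producing a summable series. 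Without these two ingredients (oscillatory decay and the reverse H\"older substitute for $A_p^+$), the far region cannot be bounded uniformly in the coefficients.

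A secondary, related issue is your choice of critical scale $R=|a_d|^{-1/d}$. The matching of powers of $|a_d|$ in the far region requires the scale $|a_d|^{-1/(d-1)}$ (the paper's $b=\max\{|a_m|^{-1/(m-1)},2\}$): with the van der Corput gain $|a_d|^{-c/(d-1)}$ one needs the far region to start at $|x|\gtrsim |a_d|^{-1/(d-1)}$ so that the two powers cancel, whereas with $|a_d|^{-1/d}$ a factor $|a_d|^{-c/(d(d-1))}$ survives and blows up as $|a_d|\to 0$. (Your middle region would still close with either exponent, and your base case via \cite{OS} is fine, as is the observation that $P'(0)=0$ and the degree are preserved under rescaling and under removal of the top-degree term, which is what makes the induction legitimate.) To repair the argument you should import the paper's $K_1+K_2$ splitting for $x<-b$, prove or quote Lemma \ref{lemma 2.6}, and run the interpolation plus reverse H\"older computation that yields the summable bound $\sum_{j\geq j_0}2^{j/2p}2^{-j\varepsilon/(1+\varepsilon)}|a_m|^{-1/(2p(m-1))}\lesssim |a_m|^{-1/(2p(m-1))}b^{-1/2p}\leq C$.
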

The preceding argument shows that it is sufficient to prove Proposition \ref{proposition 2.1} for $H_{1}^{+}(w)$ atoms which satisfy (\ref{2.5})-(\ref{2.7}) (with $w(x)$ replaced by $w_{0}(x)$). We first list a few lemmas that are needed in the proof of Proposition \ref{proposition 2.1}.

\begin{lemma}\,\,\cite{RS}\label{lemma 2.4}
Let $Q(x)=\sum_{\alpha\leq d}q_{\alpha}x^{\alpha}$ be a polynomial in $x\in\mathbb{R}$, with degree $d$. Suppose $\varepsilon<1/d$. Then
$$
\int_{|x|\leq 1}|Q(x)|^{-\varepsilon}dx\leq A_{\varepsilon}\left(\sum_{\alpha\leq d}|q_{\alpha}|\right)^{-\varepsilon}.
$$
\end{lemma}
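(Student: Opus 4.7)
The plan is to reduce to a normalized problem via homogeneity and then combine a uniform sublevel-set estimate with the Cavalieri (layer-cake) representation. Both sides of the inequality scale identically under $Q \mapsto cQ$ for $c>0$: multiplying through by $c$ scales the left-hand side and the right-hand side by $c^{-\varepsilon}$. So after dividing out by $\sum_{\alpha}|q_{\alpha}|$, it suffices to prove
$$
\int_{|x|\leq 1}|Q(x)|^{-\varepsilon}\,dx \leq A_{\varepsilon}
$$
under the normalization $\sum_{\alpha\leq d}|q_{\alpha}|=1$.

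The crux is the following sublevel-set estimate: there exists a constant $C_{d}$ depending only on $d$ such that, for every real polynomial $Q$ of degree at most $d$ with $\sum_{\alpha}|q_{\alpha}|=1$ and every $\lambda>0$,
$$
|\{x\in[-1,1]\,:\, |Q(x)|\leq \lambda\}|\leq C_{d}\,\lambda^{1/d}.
$$
I would obtain this by a compactness argument in the finite-dimensional vector space $\mathcal{P}_{d}$ of polynomials of degree at most $d$: the $\ell^{1}$-unit sphere of coefficient vectors is compact, and by the equivalence of norms in $\mathcal{P}_{d}$, the map $Q\mapsto \|Q\|_{L^{\infty}([-1,1])}$ is bounded below by a positive constant on this sphere. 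In particular, the limit of any convergent sequence of normalized $Q_{n}$ is a nonzero polynomial of degree at most $d$, which has at most $d$ real roots and therefore enjoys a Lojasiewicz-type sublevel bound of order $\lambda^{1/d}$ near each root; standard compactness then upgrades this to a uniform bound on the whole sphere.

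With the sublevel estimate in hand, the Cavalieri formula yields
$$
\int_{|x|\leq 1}|Q(x)|^{-\varepsilon}\,dx
=\varepsilon\int_{0}^{\infty}\lambda^{-\varepsilon-1}\,|\{x\in[-1,1]\,:\, |Q(x)|\leq \lambda\}|\,d\lambda.
$$
Inserting the bound $\min(2,C_{d}\lambda^{1/d})$ and splitting at the crossover value $\lambda_{0}=(2/C_{d})^{d}$, the right-hand side is majorized by
$$
\varepsilon\int_{0}^{\lambda_{0}}C_{d}\,\lambda^{1/d-\varepsilon-1}\,d\lambda
\;+\;\varepsilon\int_{\lambda_{0}}^{\infty}2\lambda^{-\varepsilon-1}\,d\lambda,
$$
and both pieces are finite thanks precisely to the hypothesis $0<\varepsilon<1/d$ (the first integral is integrable at $0$ because $1/d-\varepsilon>0$; the second is integrable at $\infty$ because $\varepsilon>0$). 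The resulting bound $A_{\varepsilon}$ depends only on $d$ and $\varepsilon$, as required.

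The main obstacle is the sublevel-set estimate with exponent exactly $1/d$ and with a constant uniform over the normalized coefficient simplex. The uniformity is what forces us into the compactness argument; without it, one could only extract a bound depending on the specific polynomial $Q$. Once the uniform $\lambda^{1/d}$ bound is secured, the rest of the proof is an essentially mechanical interpolation between the trivial bound $2$ for the measure and the sublevel bound for small $\lambda$.
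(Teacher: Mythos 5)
The paper itself gives no proof of this lemma; it is quoted directly from Ricci--Stein \cite{RS}, so your argument can only be judged on its own terms. Your overall architecture is the standard one and is sound: the reduction by homogeneity to $\sum_{\alpha}|q_{\alpha}|=1$ is correct, the layer-cake identity is correct, and the splitting at $\lambda_{0}$ uses the hypothesis $0<\varepsilon<1/d$ exactly where it is needed, producing a constant depending only on $d$ and $\varepsilon$. Everything therefore hinges on the uniform sublevel-set estimate $|\{x\in[-1,1]:|Q(x)|\leq\lambda\}|\leq C_{d}\lambda^{1/d}$ over the normalized coefficient sphere, and this is where your write-up has a genuine gap. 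The phrase ``standard compactness then upgrades this to a uniform bound'' hides the real difficulty: if $Q_{n}\to Q$ in coefficients, then from $|Q_{n}(x)|\leq\lambda_{n}$ you only get $|Q(x)|\leq\lambda_{n}+\|Q_{n}-Q\|_{L^{\infty}([-1,1])}$, and when $\lambda_{n}$ is much smaller than the perturbation $\|Q_{n}-Q\|_{\infty}$ (which is exactly the regime a contradiction argument must handle, since $\lambda_{n}\to0$) the sublevel bound for the limit $Q$ at scale $\lambda_{n}+\|Q_{n}-Q\|_{\infty}$ tells you nothing about $\lambda_{n}^{-1/d}|\{|Q_{n}|\leq\lambda_{n}\}|$. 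The degenerate case where the limit polynomial has strictly smaller degree (leading coefficients tending to zero) is precisely where such a naive limiting argument is least controlled, so as written the key quantitative input is asserted rather than proved.

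The estimate is true, and there are short self-contained proofs you could substitute. One clean route: if $E=\{x\in[-1,1]:|Q(x)|\leq\lambda\}$ has measure $\mu$, choose $d+1$ points $x_{0}<\dots<x_{d}$ in $E$ with consecutive separations at least $c\mu/d$, write $Q$ by Lagrange interpolation through these points, and read off that every coefficient satisfies $|q_{\alpha}|\leq C_{d}\lambda\mu^{-d}$; the normalization $\sum_{\alpha}|q_{\alpha}|=1$ then forces $\mu\leq C_{d}'\lambda^{1/d}$, which is the uniform bound with no compactness needed. (Alternatively one can use the root-factorization bound $|\{x:|Q(x)|\leq\lambda\}|\leq 2d(\lambda/|q_{d}|)^{1/d}$ and an induction handling a small leading coefficient, which is closer in spirit to the Ricci--Stein source.) With the sublevel estimate established by such an argument, the rest of your proof goes through verbatim.
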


\begin{lemma}\,\,\cite{RS}\label{lemma 2.5}
Let $\psi\in C^{1}[\alpha,\beta]$, $\varepsilon=\min\{{1}/{a_{1}},{1}/{n}\}$, $\lambda>0$. Then
$$
\left|\int_{\alpha}^{\beta}e^{i\lambda\phi(t)}\psi(t)dt\right|\leq C\lambda^{-\varepsilon}\left\{\sup_{\alpha\leq t\leq\beta}|\psi(t)|+\int_{\alpha}^{\beta}|\psi'(t)|dt\right\},
$$
where $\phi$ is real-valued phase of the form $\phi(t)=t^{a_{1}}+\mu_{2}t^{a_{2}}+\cdots+\mu_{n}t^{a_{n}}$
with real parameters $\mu_{2},\cdots,\mu_{n}$  and distinct positive exponents $a_{1},a_{2},\cdots,a_{n}$.
\end{lemma}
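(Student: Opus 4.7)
The plan is to first reduce to unit amplitude by a standard one-dimensional integration by parts, and then to establish the uniform bound on the phase factor by induction on $n$. Set $F(x) = \int_\alpha^x e^{i\lambda\phi(s)}\, ds$. The identity
$$
\int_\alpha^\beta e^{i\lambda\phi(t)} \psi(t)\, dt = \psi(\beta)\, F(\beta) - \int_\alpha^\beta F(t)\, \psi'(t)\, dt
$$
reduces the claim to showing $\|F\|_\infty \leq C\lambda^{-\varepsilon}$ with constant independent of $\mu_2,\dots,\mu_n$, since the factor $\sup|\psi| + \int|\psi'|$ then appears on the right.

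For $n = 1$, the phase reduces to $\phi(t) = t^{a_1}$. I would split the integration region into $\{|t|\leq \lambda^{-1/a_1}\}$ (bounded trivially by the length of the interval, contributing $\lambda^{-1/a_1}$) and its complement, on which $|\phi^{(a_1)}(t)| = a_1!$ is bounded away from zero and the classical van der Corput lemma of order $a_1$ applies. This yields the bound $\lambda^{-1/a_1}$.

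For the inductive step, I would analyze $\phi'(t) = a_1 t^{a_1-1} + \mu_2 a_2 t^{a_2-1} + \cdots + \mu_n a_n t^{a_n-1}$, a polynomial with at most $n$ monomial terms. For a threshold $\delta > 0$ to be optimized, set $E_\delta = \{t \in [\alpha,\beta] : |\phi'(t)| < \delta\}$. On the complement $E_\delta^c$, which consists of a bounded number of subintervals (depending only on $n$, by Descartes-type bounds on the zeros of polynomials with $n$ monomials), $\phi'$ is nonvanishing and integration by parts via $e^{i\lambda\phi} = (i\lambda\phi')^{-1}\frac{d}{dt} e^{i\lambda\phi}$ contributes $O(\lambda^{-1}\delta^{-1})$ per subinterval. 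For $E_\delta$ itself, Lemma \ref{lemma 2.4} applied to $\phi'$ (after rescaling to a unit interval) supplies a coefficient-free bound $|E_\delta| \leq C\, \delta^{1/(a_n-1)}$. Optimizing $\delta$ to balance the two contributions delivers the exponent $\lambda^{-1/n}$.

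The chief obstacle is uniformity in the parameters $\mu_2,\dots,\mu_n$: the constant in the final estimate must depend only on $n$ and the $a_j$'s, not on the $\mu_j$. This is precisely what Lemma \ref{lemma 2.4} supplies --- its sublevel-set estimate is coefficient-free --- and it is the reason that Lemma is recorded together with the present one. A secondary technical point is counting the components of $E_\delta^c$ and tracking how the rescaling of the interval interacts with $\lambda$ and the exponents $a_j$; this bookkeeping is what produces the final exponent $\min\{1/a_1,\, 1/n\}$ rather than one deteriorating with the size of $\mu_j$.
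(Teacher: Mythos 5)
The paper gives no proof of Lemma \ref{lemma 2.5} at all --- it is quoted from \cite{RS} and used as a black box --- so your argument has to stand on its own, and as written it does not. The reduction to unit amplitude via $F(x)=\int_\alpha^x e^{i\lambda\phi}$ is fine, and the $n=1$ case is fine when $a_1$ is a positive integer (though the statement allows arbitrary distinct positive exponents, so ``$\phi^{(a_1)}(t)=a_1!$'' and van der Corput of order $a_1$ do not literally make sense; a first-derivative/monotonicity argument is what is actually available there). The problem is the core of your general-$n$ step, which, despite being announced as an induction, never uses an inductive hypothesis and instead rests on Lemma \ref{lemma 2.4}.

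Lemma \ref{lemma 2.4} is a sublevel-set estimate whose exponent is constrained by the \emph{degree}: it requires $\varepsilon<1/d$, where for $Q=\phi'$ one has $d=\max_j a_j-1$, and it is completely blind to the number of monomials $n$. So the best your scheme can produce is $|E_\delta|\lesssim\delta^{\epsilon}$ with $\epsilon<1/(\max_j a_j-1)$, and balancing this against the $O(\lambda^{-1}\delta^{-1})$ contribution from $E_\delta^c$ gives a decay of order $\lambda^{-1/\max_j a_j}$, not $\lambda^{-\min\{1/a_1,1/n\}}$. Concretely, for $\phi(t)=t^2+\mu t^{100}$ ($n=2$, so the lemma asserts $C\lambda^{-1/2}$ uniformly in $\mu$) your optimization yields only about $\lambda^{-1/100}$; the claim that ``optimizing $\delta$ \dots delivers the exponent $\lambda^{-1/n}$'' is unsupported, since $n$ never enters your estimates. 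A second genuine issue is uniformity in $[\alpha,\beta]$: after rescaling $t=Rs$ the coefficient sum in Lemma \ref{lemma 2.4} grows like $R^{a_1-1}$ while the measure scales back by a factor $R$, and these cancel only if the sublevel exponent is at least $1/(a_1-1)$, which Lemma \ref{lemma 2.4} does not supply once some $a_j>a_1$; so the ``rescaling to a unit interval'' does not give an interval-independent bound. (There is also the minor point that the $O(\lambda^{-1}\delta^{-1})$ bound on each component of $E_\delta^c$ needs $\phi'$ monotone there, i.e.\ a further subdivision at the $O(n)$ zeros of $\phi''$.) To obtain the stated exponent one needs an argument that sees only the number of monomials and the normalized term --- e.g.\ a genuine induction on $n$, as in \cite{RS}, in which the domain is split according to which term of $\phi'$ dominates: where $a_1t^{a_1-1}$ dominates, the first-derivative van der Corput lemma gives the $\lambda^{-1/a_1}$ behavior, and the remaining ranges are handled by the case of $n-1$ monomials, which is where $\lambda^{-1/n}$ comes from; no degree-dependent sublevel estimate enters, which is precisely why the constant is independent of the $\mu_j$ and of the sizes of the other exponents.
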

\begin{lemma}\,\,\label{lemma 2.6}
Let $P(x)$ be a polynomial of degree $m(m\geq 2)$ and $P(x)=\sum_{\alpha\leq m}a_{\alpha}x^{\alpha}$. Suppose $\varphi$ and $\psi$ are two functions in $C_{0}^{\infty}$. Define $T_{j}^{+}$ by
$$
(T_{j}^{+}f)(x)=\psi(2^{-j}x)\int_{x}^{\infty}e^{iP(x-y)}\varphi(y)f(y)dy.
$$
Then we have
$$
\|T_{j}^{+}f\|_{L^{2}(\mathbb{R})}\leq C|a_{m}|^{-1/(4(m-1))}2^{j/4}\|f\|_{L^{2}(\mathbb{R})}.
$$
\end{lemma}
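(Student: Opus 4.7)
My plan is to use the $TT^{*}$ method together with van der Corput's lemma (Lemma \ref{lemma 2.5}). Writing the kernel of $T_{j}^{+}$ as $K_{j}(x,y)=\psi(2^{-j}x)e^{iP(x-y)}\varphi(y)\chi_{\{y>x\}}$, the kernel of $T_{j}^{+}(T_{j}^{+})^{*}$ is
$$L(x,x')=\psi(2^{-j}x)\overline{\psi(2^{-j}x')}\int_{\max(x,x')}^{\infty}e^{i[P(x-y)-P(x'-y)]}|\varphi(y)|^{2}\,dy.$$
Since $\|T_{j}^{+}\|_{L^{2}\to L^{2}}^{2}=\|T_{j}^{+}(T_{j}^{+})^{*}\|_{L^{2}\to L^{2}}$, Schur's test reduces the task to estimating $L(x,x')$ pointwise and integrating in $x'$.

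The key observation is that the phase $\Phi(y):=P(x-y)-P(x'-y)$, viewed as a polynomial in $y$, has degree \emph{exactly} $m-1$: the $y^{m}$ contributions from $P(x-y)$ and $P(x'-y)$ both equal $a_{m}(-y)^{m}$ and cancel, while the $y^{m-1}$ coefficient is $(-1)^{m-1}m\,a_{m}(x-x')$. I would then factor this leading coefficient out and invoke Lemma \ref{lemma 2.5} with $a_{1}=m-1$ and $n\le m-1$ distinct positive exponents (so that $\varepsilon=1/(m-1)$), applied with amplitude $|\varphi|^{2}$, which is smooth and compactly supported. This yields
$$\left|\int_{\max(x,x')}^{\infty}e^{i\Phi(y)}|\varphi(y)|^{2}\,dy\right|\le C|a_{m}(x-x')|^{-1/(m-1)}.$$
Combining with the trivial bound from $\int|\varphi|^{2}<\infty$ produces the pointwise estimate
$$|L(x,x')|\le C|\psi(2^{-j}x)\,\psi(2^{-j}x')|\,\min\bigl(1,\,|a_{m}(x-x')|^{-1/(m-1)}\bigr).$$

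For the Schur step, since $\psi\in C_{0}^{\infty}$ confines $x,x'$ to an interval of length $\lesssim 2^{j}$, one is reduced to
$$\sup_{x}\int|L(x,x')|\,dx'\le C\int_{|x-x'|\lesssim 2^{j}}\min\bigl(1,|a_{m}(x-x')|^{-1/(m-1)}\bigr)\,dx'.$$
Interpolating the two bounds via $\min(1,t)\le t^{\theta}$ with the symmetric choice $\theta=1/2$ (equivalently, splitting the integration at $|x-x'|=1/|a_{m}|$ and summing the two pieces), the integral is controlled by $C|a_{m}|^{-1/(2(m-1))}2^{j/2}$. Taking square roots gives the desired bound $\|T_{j}^{+}f\|_{L^{2}}\le C|a_{m}|^{-1/(4(m-1))}2^{j/4}\|f\|_{L^{2}}$, and by construction the constant depends only on $m$ and on fixed $C_{0}^{\infty}$-norms of $\psi,\varphi$, not on the other coefficients of $P$.

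The main obstacle is the fine balancing in the Schur step: one must combine the trivial regime $|x-x'|\lesssim 1/|a_{m}|$ with the van der Corput regime so that the $2^{j}$-support of $\psi(2^{-j}\cdot)$ contributes precisely a factor $2^{j/2}$ to $\|T_{j}^{+}(T_{j}^{+})^{*}\|$, rather than the cruder $2^{j}$ one would get from ignoring the oscillation. A secondary technical point is verifying that Lemma \ref{lemma 2.5}'s hypotheses (distinct positive exponents) apply after factoring out the $y^{m-1}$ coefficient, and that the amplitude $|\varphi|^{2}$ satisfies the required $C^{1}$ bounds uniformly in $j$ — both of which follow from $\varphi\in C_{0}^{\infty}$.
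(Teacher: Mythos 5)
Your $TT^{*}$ setup is sound and is in the same spirit as the argument the paper points to (it cites Hu--Pan; Lemma \ref{lemma 2.6} is stated without proof): the cancellation of the $y^{m}$ terms, the identification of the $y^{m-1}$ coefficient of $P(x-y)-P(x'-y)$ as $(-1)^{m-1}m\,a_{m}(x-x')$, the use of Lemma \ref{lemma 2.5} with $\varepsilon=1/(m-1)$, and the reduction to Schur row sums are all correct. The gap is in the final integral. With $s=1/(m-1)$, your bound $\min(1,u^{-s})\le u^{-s/2}$ gives
$$\int_{0}^{C2^{j}}\bigl(|a_{m}|t\bigr)^{-s/2}\,dt\;\approx\;|a_{m}|^{-\frac{1}{2(m-1)}}\,2^{\,j\bigl(1-\frac{1}{2(m-1)}\bigr)},$$
not $|a_{m}|^{-1/(2(m-1))}2^{j/2}$; the exponent of $2^{j}$ equals $1/2$ only when $m=2$ (and the "equivalent" splitting at $|x-x'|=1/|a_{m}|$ gives yet another answer, $\min(2^{j},|a_{m}|^{-1})+|a_{m}|^{-s}2^{j(1-s)}$). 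So your argument proves the lemma for $m=2$, but for $m\ge 3$ it only yields $\|T_{j}^{+}\|\lesssim |a_{m}|^{-1/(4(m-1))}2^{j(1/2-1/(4(m-1)))}$, which is strictly weaker than the asserted $2^{j/4}$ growth.

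This is not a repairable arithmetic slip within your scheme: the pointwise kernel bound $\min\bigl(1,|a_{m}(x-x')|^{-1/(m-1)}\bigr)$, which uses only the top coefficient of the phase difference, is genuinely too weak for $m\ge3$. For instance, if $m=3$ and $|a_{m}|\sim 2^{-j}$ (allowed by the lemma, and occurring in its application since one only has $2^{j}\gtrsim|a_{m}|^{-1/(m-1)}$ there), then $|a_{m}(x-x')|\le C$ throughout $|x-x'|\lesssim 2^{j}$, the row sum is $\sim 2^{j}$, and Schur returns only the trivial bound $2^{j/2}$, while the lemma claims $2^{3j/8}$. To reach the stated estimate one must exploit the lower-order $y$-coefficients of $P(x-y)-P(x'-y)$ as well, e.g.\ the $y^{m-2}$ coefficient, which contains the large factor $x+x'\sim 2^{j}$, and then integrate in $x'$ a negative power of a polynomial in $x'$ by means of Lemma \ref{lemma 2.4} (rescaled to the interval of length $\sim2^{j}$). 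That combination of Lemma \ref{lemma 2.4} with Lemma \ref{lemma 2.5} is precisely what the paper's remark before the lemma indicates, and it is the ingredient your proposal never uses.
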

Combining Lemma \ref{lemma 2.4} with Lemma \ref{lemma 2.5}, we can prove Lemma \ref{lemma 2.6} by a similar analysis as in \cite{HP}, corresponding argument, see also \cite{FLSS} and \cite{RS}.

The following proposition about one-sided Calder\'{o}-Zygmund $\widetilde{T}^{+}$ play a key role in the proof of Proposition \ref{proposition 2.1}.
\begin{proposition}\label{proposition 2.2}
Let $\widetilde{T}^{+}$ be a one-sided Calder\'{o}-Zygmund operator and $a(x)$ be a $H^{1}_{+}(w)$ atom satisfy $(\ref{2.5})$-$(\ref{2.7})$. Then
$$
\|\widetilde{T}^{+}a\|_{L^{1}(w)}\leq C.
$$
\end{proposition}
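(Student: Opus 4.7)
My plan is the standard Hardy space atomic decomposition argument, adapted to the one-sided setting. Since $\supp K\subset(-\infty,0)$ and $\supp a\subset I_0^-=[-1,0]$, we have $\widetilde{T}^+ a(x)=0$ for $x>0$, so it is enough to control
\[
\int_{-2}^{0}\bigl|\widetilde{T}^+ a(x)\bigr|w(x)\,dx \quad\text{and}\quad \int_{-\infty}^{-2}\bigl|\widetilde{T}^+ a(x)\bigr|w(x)\,dx
\]
by a constant each.

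\emph{Local part.} Fix any $s\in(1,\infty)$ and apply H\"older's inequality. Since $A_1^+\subset A_s^+$ by property (c), the $L^s(w)$-boundedness of $\widetilde{T}^+$ from \cite{AFM}, together with (\ref{2.7}), yields
\[
\Bigl(\int_{-2}^{0}|\widetilde{T}^+ a|^s w\,dx\Bigr)^{1/s}\leq C\|a\|_{L^s(w)}\leq C\|a\|_{L^\infty}w(I_0^-)^{1/s}\leq Cw(I_0^-)^{1/s-1}.
\]
By the one-sided doubling (\ref{2.2}), $w([-2,0])\leq Cw(I_0^-)$, so the second H\"older factor is at most $Cw(I_0^-)^{1/s'}$, and multiplying produces $Cw(I_0^-)^{1/s+1/s'-1}=C$.

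\emph{Far part.} For $x<-2$ and $y\in I_0^-$ one has $|x|>2|y|$, so kernel property (c) applies. Using cancellation (\ref{2.6}) and the bound (\ref{2.7}),
\[
|\widetilde{T}^+ a(x)|=\Bigl|\int_{I_0^-}\bigl(K(x-y)-K(x)\bigr)a(y)\,dy\Bigr|\leq \frac{C\|a\|_{L^\infty}|I_0^-|}{|x|^2}\leq \frac{C}{w(I_0^-)\,|x|^2}.
\]
A dyadic splitting $(-\infty,-2]=\bigcup_{k\geq 0}[-2^{k+2},-2^{k+1}]$, combined with (\ref{2.2}) in the form $w([-2^{k+2},0])\leq C\,2^{k+2}w(I_0^-)$, gives
\[
\int_{-\infty}^{-2}\frac{w(x)}{|x|^2}\,dx\leq \sum_{k=0}^{\infty}\frac{w([-2^{k+2},0])}{2^{2(k+1)}}\leq Cw(I_0^-)\sum_{k=0}^{\infty}2^{-k}\leq Cw(I_0^-),
\]
so the far part is also bounded by $C$.

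\emph{Main obstacle.} The substantive feature is that $A_1^+$ weights lack two-sided doubling and obey only the linear-growth estimate $w(\lambda I^-)\leq C\lambda w(I^-)$ of (\ref{2.2}). This is just enough: the linear factor $2^{k+1}$ is beaten by the $|x|^{-2}\sim 2^{-2k}$ decay from the kernel smoothness, leaving a convergent geometric series $\sum 2^{-k}$. A minor technicality is that the smoothness requires $|x|>2|y|$ strictly, which is precisely why I inflate the local region to $[-2,0]$ rather than taking $I_0^-$ itself.
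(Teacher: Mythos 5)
Your argument is correct and follows essentially the same route as the paper's proof: a local estimate on $[-2,0]$ via H\"older plus the weighted $L^{s}(w)$-boundedness of $\widetilde{T}^{+}$ (the paper takes $s=2$) together with the one-sided doubling (\ref{2.2}), and a far estimate via the cancellation (\ref{2.6}), the kernel smoothness, and a dyadic sum controlled again by (\ref{2.2}). Your explicit remark that $\widetilde{T}^{+}a$ vanishes for $x>0$ is a harmless clarification of what the paper leaves implicit.
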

\begin{proof}
Let $\supp a\subset I_{0}^{-}=[-1,0]$ and $\widetilde{I}^{-}=2I_{0}^{-}=[-2,0]$. Then for $x\in \widetilde{I}^{-,c} \equiv(x_{-\infty},\infty)\setminus \widetilde{I}^{-}$, we have
\begin{align*}
|\widetilde{T}^{+}a(x)|&\leq \int_{x}^{\infty}|K(x-y)-K(x)||a(y)|dy\\
&\leq
\frac{1}{|x|^{2}}\int_{I_{0}^{-}}|y||a(y)|dy\\
&\leq
\frac{1}{|x|^{2}}w(I_{0}^{-})^{-1}.
\end{align*}
This implies
\begin{align*}
\int_{\widetilde{I}^{-,c}}|\widetilde{T}^{+}a(x)|w(x)dx
&\leq
C\int_{\widetilde{I}^{-,c}}\frac{1}{|x|^{2}}w(I_{0}^{-})^{-1}w(x)dx\\
&\leq
Cw(I_{0}^{-})^{-1}\int_{x_{-\infty}}^{-2}\frac{w(x)}{|x|^{2}}dx\\
&\leq
Cw(I_{0}^{-})^{-1}\sum_{j=1}^{\infty}2^{-2j}\int_{-2^{j+1}}^{0}w(x)dx\\
&\leq
C\sum_{j=1}^{\infty}2^{-j}\leq
C,
\end{align*}
where we use (\ref{2.2}) and (\ref{2.7}).

For $x\in \widetilde{I}^{-}$, it is easy to get
\begin{align*}
\int_{\widetilde{I}^{-}}|\widetilde{T}^{+}a(x)|w(x)dx
&\leq
(\int_{\mathbb{R}}|T^{+}a|^{2}w(x)dx)^{1/2}(\int_{-2}^{0}w(x)dx)^{1/2}\\
&\leq
C(\int_{\mathbb{R}}|a|^{2}w(x)dx)^{1/2}w(\widetilde{I}^{-})^{1/2}\\
&\leq
C\|a\|_{L^{\infty}}w(I_{0}^{-})
\leq
C.
\end{align*}
\end{proof}

We now come back to the proof of Proposition \ref{proposition 2.1}. Assume that $a$ is a $H^{1}_{+}(w)$ atom that satisfies (\ref{2.5})-(\ref{2.7}). Adopting the idea in \cite{HP}, we shall prove Proposition \ref{proposition 2.1} by using induction on $m$, the degree of $P(x)$. When $m=0$, that is
$P(x)=0$, which imply $T^{+}=\widetilde{T}^{+}$ in that case. Proposition \ref{proposition 2.1} holds by Proposition \ref{proposition 2.2}. We now assume that Proposition \ref{proposition 2.1} is true for $deg(P)\leq m-1$. The task is now to show Proposition \ref{proposition 2.1} for $deg(P)= m$. We write
$$
P(x-y)=a_{m}(x-y)^{m}+P_{m-1}(x-y),
$$
where $deg(P_{m-1})\leq m-1$. Let $b=\max\{|a_{m}|^{-1/(m-1)},2\}$. We distinguish two cases to obtain our desired results.

{\it Case} 1. $b<-x_{-\infty}$. In this case, we break the integral into three parts:
\begin{align*}
\|T^{+}a\|_{L^{1}(w)}&\leq \left|\int_{|x|\leq b}T^{+}a(x)w(x)dx\right|+\left|\int_{x_{-\infty}}^{-b}T^{+}a(x)w(x)dx\right|+\left|\int_{b}^{\infty}T^{+}a(x)w(x)dx\right|\\
&=:I_{1}+I_{2}+I_{3}.
\end{align*}

The first step is to show that $I_{1}\leq C.$ If $b=2,$ the estimates follows from a standard argument as
\begin{align*}
I_{1}&= \left|\int_{|x|\leq 2}T^{+}a(x)|w(x)dx\right|\\
&\leq \|T^{+}(a)\|_{L^{2}(w)}\left(\int_{-2}^{0}w(x)dx\right)^{1/2}\\
&\leq C\|a\|_{L^{2}(w)}w(I_{0}^{-})^{1/2}\\
&\leq C\|a\|_{L^{\infty}}w(I_{0}^{-})\\
&\leq C,
\end{align*}
where we use (\ref{2.2}), (\ref{2.7}) and the weighted $L^{p}$ estimate for $T^{+}$ (\cite{FSL}).

Assuming $b=|a_{m}|^{-1/(m-1)}$, by the above argument for $b=2$, we have
\begin{align*}
I_{1}&\leq \left|\int_{|x|\leq 2}T^{+}a(x)|w(x)dx\right|+\left|\int_{2\leq|x|\leq b}T^{+}a(x)|w(x)dx\right|\\
&\leq C+\left|\int_{- b}^{-2}\left|\int_{x_{-\infty}}^{\infty}e^{iP_{m-1}(x-y)}K(x-y)a(y)dy\right|w(x)dx\right|\\
&\quad +
\left|\int_{- b}^{-2}\left|\int_{x_{-\infty}}^{\infty}e^{i(P(x-y)-P_{m-1}(x-y)-a_{m}x^{m})}K(x-y)a(y)dy\right|w(x)dx\right|\\
&=:C+J_{1}+J_{2}.
\end{align*}

By inductive hypothesis, $J_{1}\leq C.$ On the other hand, we have
\begin{align*}
J_{2}&\leq C|a_{m}|\int_{-b}^{-2}\int_{I_{0}^{-}}\frac{|(x-y)^{m}-x^{m}|}{y-x}|a(y)|dyw(x)dx\\
&\leq C|a_{m}|\int_{- b}^{-2}|x|^{m-2}w(x)dx\int_{I_{0}^{-}}|a(y)|dy\\
&\leq C|a_{m}|\sum_{j\geq 1, 2^{j}\leq b}2^{j(m-2)}\int_{- b}^{0}w(x)dx\int_{I_{0}^{-}}|a(y)|dy\\
&\leq C|a_{m}|b^{m-1}\\
&\leq C.
\end{align*}

Next, we prove that $I_{2}\leq C$. Assume that $2^{j_{0}}\leq b\leq 2^{j_{0}+1}$. Let $\varphi\in C_{0}^{\infty}(\mathbb{R})$ and $\varphi=1$ on $I_{0}^{-}$. Choosing $\psi\in C_{0}^{\infty}(\mathbb{R})$ such that $\supp (\psi)\subset \{1/4<|x|<4\}$, $\psi\geq 0$ and $\psi(x)=1$, for $1\leq |x|\leq 2.$ We have
\begin{align*}
I_{2}&\leq \int_{x_{-\infty}}^{-b}\int_{x}^{\infty}|K(x-y)-K(x)||a(y)|dyw(x)dx+\int_{x_{-\infty}}^{-b}\left|\int_{x}^{\infty}e^{iP(x-y)}a(y)dyK(x)w(x)\right|dx\\
&=: K_{1}+K_{2}.
\end{align*}

Since $K$ is one-sided Calder\'{o}-Zygmund kernel, by (\ref{2.7}), we can estimate $K_{1}$ as
\begin{align*}
K_{1}&\leq \int_{x_{-\infty}}^{-2}\|a\|_{L^{\infty}}\int_{-1}^{0}dy\frac{w(x)}{|x|^{2}}dx\\
&\leq \frac{C}{w(I_{0}^{-})}\int_{x_{-\infty}}^{-2}\frac{w(x)}{|x|^{2}}dx\\
&\leq \frac{C}{w(I_{0}^{-})}\sum_{j=1}^{\infty}2^{-2j}w(2^{j+1}I_{0}^{-})\\
&\leq C.
\end{align*}
While the H\"{o}lder inequality allows us to estimate $K_{2}$ as
\begin{align*}
K_{2}&\leq \sum_{j\geq j_{0}}\int_{-2^{j+1}}^{-2^{j}}\frac{w(x)}{|x|}\psi(2^{-j}x)\left|\int_{x}^{\infty}e^{iP(x-y)}\varphi(y)a(y)dy\right|dx\\
&= \sum_{j\geq j_{0}}\int_{-2^{j+1}}^{-2^{j}}\frac{w(x)}{|x|}|T_{j}^{+}(a)|dx\\
&\leq \sum_{j\geq j_{0}}\|T_{j}^{+}(a)\|_{L^{p}}\left(\int_{-2^{j+1}}^{-2^{j}}\frac{w(x)^{1+\varepsilon}}{|x|^{1+\varepsilon}}dx\right)^{1/(1+\varepsilon)}.
\end{align*}

Invoking the properties (a) of $A_{1}^{+}$ weights and (\ref{2.4}), we obtain
$$
\int_{-2^{j+1}}^{-2^{j}}\frac{w(x)^{1+\varepsilon}}{|x|^{1+\varepsilon}}dx
\leq C2^{-j\varepsilon}\frac{2^{j+1}w^{1+\varepsilon}(I_{0}^{-})}{|2^{j+1}I_{0}^{-}|}
\leq C2^{-j\varepsilon} w^{1+\varepsilon}(I_{0}^{-}).
$$
Therefore,
$$
I_{2}\leq C+\sum_{j\geq j_{0}}\|T_{j}^{+}(a)\|_{L^{p}}2^{-j\varepsilon/(1+\varepsilon)}w(I_{0}^{-}).
$$

After noting that
$$
\|T_{j}^{+}(a)\|_{L^{\infty}}\leq C\|a\|_{L^{\infty}},
$$
by Lemma \ref{lemma 2.6} and interpolation we get
\begin{align*}
\|T_{j}^{+}(a)\|_{L^{p}}&\leq 2^{j/p}(|a_{m}|2^{j(m-1)})^{-1/(2p(m-1))}\left(\int_{I_{0}^{-}}|a|^{p}\right)^{1/p}\\
&\leq 2^{j/2p}(|a_{m}|)^{-1/(2p(m-1))}\|a\|_{L^{\infty}}\\
&\leq 2^{j/2p}(|a_{m}|)^{-1/(2p(m-1))}w(I_{0}^{-})^{-1}.
\end{align*}

The estimate for $I_{2}$ is completed by showing that
\begin{align*}
I_{2}&\leq C+\sum_{j\geq j_{0}}2^{j/2p}|a_{m}|^{-1/2p(m-1)}w(I_{0}^{-})^{-1}2^{-j\varepsilon/(1+\varepsilon)}w(I_{0}^{-})\\
&\leq C+|a_{m}|^{-1/2p(m-1)}2^{-j_{0}/2p}\\
&\leq C+|a_{m}|^{-1/2p(m-1)}b^{-1/2p}\\
&\leq C.
\end{align*}

On the other hand, $\supp K=(-\infty,0)$ and $\supp a\subset I_{0}^{-}$ show that
$$
I_{3}=0.
$$
We conclude from above estimate for $I_{1}$, $I_{2}$ and $I_{3}$ that
$$
\|T^{+}(a)\|_{L^{1}(w)}\leq C.
$$

{\it Case} 2. $b>-x_{-\infty}$. In this case, we have
\begin{align*}
\|T^{+}(a)\|_{L^{1}(w)}&\leq \left|\int_{x_{-\infty}}^{b}T^{+}(a)(x)w(x)dx\right|+\left|\int_{b}^{\infty}T^{+}(a)(x)w(x)dx\right|\\
&=: \widetilde{I}_{1}+\widetilde{I}_{2}.
\end{align*}

Similar as in the estimate of $I_{3}$, we have $\widetilde{I}_{2}=0$. So, we only need to consider $\widetilde{I}_{1}$. If $b=2,$ by Lemma \ref{lemma 2.2} and (\ref{2.7}), we have
\begin{align*}
\widetilde{I}_{1}&\leq \|T^{+}(a)\|_{L^{2}(w)}(\int_{x_{-\infty}}^{0}w(x)dx)^{1/2}\\
&\leq \|a\|_{L^{2}(w)}\left(\int_{-1}^{0}w(x)dx\right)^{1/2}\\
&\leq \|a\|_{L^{\infty}}w(I_{0}^{-})\\
&\leq C.
\end{align*}

If $b=|a_{m}|^{-1/(m-1)}$. Applying the above estimates for $\widetilde{I}_{1}$ when $b=2,$ we have
\begin{align*}
\widetilde{I}_{1}&\leq \left|\int_{x_{-\infty}}^{2}T^{+}(a)w(x)dx\right|+\left|\int_{2}^{b}T^{+}(a)w(x)dx\right|\\
&\leq C+\widetilde{J}_{1}\\
&\leq C
\end{align*}
as a result of $\widetilde{J}_{1}=0.$

Combining {\it Case} 1 and {\it Case} 2, we have thus proved Proposition \ref{proposition 2.1}.

Having disposed of the above preliminaries, we can now return to the proof of our main theorem.
As a byproduct of Proposition \ref{proposition 2.1}, we have
$$
\|T^{+}f\|_{L^{1}(w)}\leq \sum_{j=1}^{\infty}|\lambda_{j}|\|T^{+}(a_{j})\|_{L^{1}(w)})\leq C\sum_{j=1}^{\infty}|\lambda_{j}|\leq C\|f\|_{H_{+}^{1}(w)}.
$$

We have completed the proof of Theorem \ref{Theorem 1.1}.

Inspired by the main idea from \cite{P}, in this section, a counterexample is given to show that the $H_{+}^{1}(w)$ theory on the one-sided oscillatory singular operators can not be extended to the $H_{+}^{q}(w)$ case, if $q<1$. Let $\overline{T}^{+}$ be defined as
$$
\overline{T}^{+}f(x)=p.v.\int_{x_{-\infty}}^{\infty}e^{ixy}\frac{f(y)}{x-y}dy.
$$
Take $\delta>0$ very small, and $\supp f\subset I_{\delta}=[-\delta,\delta]$ given by
$$
f(y)=\left\{\begin{array}{ll}&(2\delta)^{-1/q},\,\,\,\,\,\,y\in [\delta/2,\delta];\\
&(-2\delta)^{-1/q},\,\,\,\,\,\,y\in [-\delta,-\delta/2];\\
&0,\,\,\,\,\,\,\mathrm{otherwise}.\end{array}\right.
$$

It is easy to check that $|f|\leq |I_{\delta}|^{-1/q}$, $\int_{I_{\delta}}f(y)dy=0.$ Therefore, we have
$$
\mathrm{Im}(T^{+}a)(x)=(2\delta)^{-1/q}\left(\int_{\delta/2}^{\delta}\frac{\sin(xy)}{x-y}dy+\int_{\delta/2}^{\delta}\frac{\sin(xy)}{x+y}dy\right).
$$

Let $x\in (-\pi/3\delta, -\pi/4\delta)$. Then $x-y<0$, $x+y<0$ for any $y\in [\delta/2,\delta]$. Also, we have $-\pi/3<xy<-\pi/8$. Thus
\begin{align*}
|\mathrm{Im}(T^{+}a)(x)|&>C(2\delta)^{-1/q}\left(\int_{\delta/2}^{\delta}\frac{dy}{y-x}+\int_{\delta/2}^{\delta}\frac{dy}{x-y}\right)\\
&=C\delta^{-1/q}\ln(1+\frac{-\delta x}{x^{2}+\delta/2 x-\delta^{2}/2})\geq C\delta^{-1/q}(-x)^{-1}.
\end{align*}
Therefore, we have
$$
\int_{\mathbb{R}}|T^{+}a(x)|^{q}dx\geq \int_{-\pi/3\delta}^{-\pi/4\delta}\delta^{q-1}(-x)^{-q}dx=\delta^{2(q-1)}.
$$

Set $\max\{-1,2(q-1)\}<\alpha\leq 0$. Then $w=|x|^{\alpha}\in A_{1}^{+}$ and
$$\int_{\mathbb{R}}|T^{+}a(x)|^{q}w(x)dx\geq C\delta^{q-1}\int_{\pi/4\delta}^{\pi/3\delta}t^{\alpha-q}dt\geq C\delta^{2(q-1)-\alpha}\rightarrow\infty$$
by letting $\delta\rightarrow 0$ since $q<1$.

\section{Criterion on weighted $L^{p}$ estimates}\label{section3}
In this section, a criterion on boundedness of the one-sided operators mentioned in Section \ref{section1} and its effects on weighted $L^{p}$ spaces are described. Let us first begins with some properties about the $A_{p}^{+}$ classes, which will be used in the proof of our main results.

\begin{lemma}\label{Lemma4.1}\cite{FSL}
Let $1<p<\infty$ and $w\in A_{p}^{+}$. Then\\
$(a)$\,\,$ A_{p}^{+}(\delta^{\lambda}(w))=A_{p}^{+}(w)$, where $\delta^{\lambda}(w)(x)=w(\lambda x)$ for all $\lambda>0$.\\
$(b)$\,\,there exists $\varepsilon>0$ such that $w^{1+\varepsilon}\in A_{p}^{+}$.
\end{lemma}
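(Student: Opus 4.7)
The plan splits naturally along parts (a) and (b), with (a) a routine scaling computation and (b) the substantive self-improvement assertion.

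For (a), I would proceed by a single change of variables in each of the two integrals in the defining supremum of $A_p^+(\delta^\lambda w)$. Substituting $u = \lambda x$ gives $\int_a^b w(\lambda x)\,dx = \lambda^{-1}\int_{\lambda a}^{\lambda b} w(u)\,du$ and $\int_b^c w(\lambda x)^{1-p'}\,dx = \lambda^{-1}\int_{\lambda b}^{\lambda c} w(u)^{1-p'}\,du$. The resulting scaling factor $\lambda^{-1}\cdot(\lambda^{-1})^{p-1} = \lambda^{-p}$ cancels exactly against $(c-a)^p = \lambda^{-p}(\lambda c-\lambda a)^p$ in the denominator. Reparameterising the supremum by $a'=\lambda a$, $b'=\lambda b$, $c'=\lambda c$, which again ranges over all admissible triples, identifies $A_p^+(\delta^\lambda w)$ with $A_p^+(w)$.

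For (b), I intend to derive the self-improvement directly from the weak reverse Hölder inequality of Lemma 2.3, in its equivalent formulation (2.4). Applied to $w$ it furnishes $\delta_1>0$ and $C_1>0$ with
$$\int_a^b w^{1+\delta_1}\,dx \leq C_1 \bigl(M^-(w\chi_{(a,b)})(b)\bigr)^{\delta_1}\int_a^b w\,dx.$$
Using the standard duality that $w \in A_p^+$ if and only if $\sigma := w^{1-p'} \in A_{p'}^-$, the symmetric version of Lemma 2.3 applied to $\sigma$ yields $\delta_2, C_2 > 0$ with the analogous inequality involving $M^+$. Choosing $\varepsilon \in (0,\min\{\delta_1,\delta_2\}]$, and substituting into the $A_p^+$ functional for $w^{1+\varepsilon}$ evaluated at an arbitrary triple $a<b<c$, the two reverse Hölder bounds control $\int_a^b w^{1+\varepsilon}$ and $\int_b^c \sigma^{1+\varepsilon}$ by $(c-a)^{-\varepsilon}\bigl(\int_a^b w\bigr)^{1+\varepsilon}$ and $(c-a)^{-\varepsilon}\bigl(\int_b^c \sigma\bigr)^{1+\varepsilon}$ respectively. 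Multiplying and invoking the $A_p^+$ hypothesis for $w$ gives $A_p^+(w^{1+\varepsilon})\leq C\bigl(A_p^+(w)\bigr)^{1+\varepsilon}<\infty$, as desired.

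The main obstacle will be the bookkeeping in (b): Lemma 2.3 as stated bounds an integral of $w^{1+\delta}$ over $(a,b)$ by a one-sided maximal function evaluated at the right endpoint, while the $A_p^+$ condition involves averages over two adjacent full sub-intervals $(a,b)$ and $(b,c)$. Passing from the maximal-function formulation of Lemma 2.3 to the clean "half versus whole" average form of (2.4), and then summing dyadic pieces of $(a,b)$ and $(b,c)$ using the one-sided doubling of Lemma 2.2, is the step requiring care; one must verify that the constants remain uniformly controlled as $b$ approaches either $a$ or $c$. Since the result is taken from \cite{FSL}, the residual combinatorics can be imported from that reference once the outline above is in place.
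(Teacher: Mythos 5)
Part (a) of your proposal is fine: the change of variables and the rescaling of the supremum is exactly the routine computation one expects (the paper itself offers no proof, citing \cite{FSL}, so there is nothing to compare against there).

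Part (b), however, has a genuine gap, and it sits exactly at the step you flag as ``bookkeeping.'' The two separate inequalities you claim, namely $\int_a^b w^{1+\varepsilon}\leq C(c-a)^{-\varepsilon}\bigl(\int_a^b w\bigr)^{1+\varepsilon}$ and $\int_b^c \sigma^{1+\varepsilon}\leq C(c-a)^{-\varepsilon}\bigl(\int_b^c\sigma\bigr)^{1+\varepsilon}$ with $\sigma=w^{1-p'}$, are false for genuinely one-sided weights: since $b-a\leq c-a$ and $c-b\leq c-a$, each of them is at least as strong as an honest reverse H\"older inequality on the single interval $(a,b)$ (resp.\ $(b,c)$), and the paper stresses that precisely this inequality fails for $A_p^+$ weights --- that is why Lemma \ref{lemma 2.3} is only a \emph{weak} substitute. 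Concretely, $w(x)=e^x\in A_1^+\subset A_p^+$ violates the first inequality at $a=0$, $b=T$, $c=T+1$ by a factor $T^{\varepsilon}$, and the second inequality (with $\sigma=e^{(1-p')x}$) fails outright as $a\to-\infty$ with $b,c$ fixed. What Lemma \ref{lemma 2.3} (or (\ref{2.4})) actually gives is control of $\int_a^b w^{1+\delta}$ by $\bigl(M^-(w\chi_{(a,b)})(b)\bigr)^{\delta}\int_a^b w$, i.e.\ by an average of $w$ taken near the \emph{right} endpoint, and this extra maximal factor cannot be bounded on its own (nor can the product of the two maximal factors coming from $w$ and $\sigma$ be bounded in general, as power weights $|x|^{p-1-\eta}$ at $b=0$ show). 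In the true statement the excess in the $w$-factor is compensated by decay in the $\sigma$-factor --- in the $e^x$ example the two exponentials cancel --- so any correct proof must couple the two intervals rather than estimate them separately. Two standard ways to do this: reduce the $A_p^+$ condition to adjacent intervals of equal length $(x-h,x)$, $(x,x+h)$ and use the $A_p^+$ condition itself to trade the maximal-function factor $M^-(w\chi_{(x-h,x)})(x)$ against averages of $\sigma$ on $(x,x+h)$; or invoke the one-sided factorization $A_p^+=A_1^+\cdot(A_1^-)^{1-p}$ together with Sawyer's power property for $A_1^{\pm}$. Deferring this to ``residual combinatorics imported from \cite{FSL}'' leaves out the one idea the lemma actually requires; as written, your outline proves only (a).
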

The following celebrated interpolation theorem of operators with change of measures will be needed in our analysis.
\begin{lemma}\label{Lemma4.2}\cite{SW}
Suppose that $u_{0},v_{0},u_{1},v_{1}$ are positive weight functions and $1<p_{0},p_{1}<\infty$. Assume sublinear operator $S$ satisfies:
$$\|Sf\|_{L^{p_{0}}(u_{0})}\leq C_{0}\|f\|_{L^{p_{0}}(v_{0})},$$
and
$$\|Sf\|_{L^{p_{1}}(u_{1})}\leq C_{1}\|f\|_{L^{p_{1}}(v_{1})}.$$
Then
$$\|Sf\|_{L^{p}(u)}\leq C\|f\|_{L^{p}(v)}$$
holds for any $0<\theta<1$ and ${1}/{p}={\theta}/{p_{0}}+{(1-\theta)}/{p_{1}}$, where
$u=u_{0}^{{(p\theta)}/{p_{0}}}u_{1}^{{p(1-\theta)}/{p_{1}}}$, $v=v_{0}^{{(p\theta)}/{p_{0}}}v_{1}^{{p(1-\theta)}/{p_{1}}}$ and $C\leq C_{0}^{\theta}C_{1}^{1-\theta}$.
\end{lemma}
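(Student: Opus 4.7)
The plan is to reduce the weighted estimate to the classical Stein complex interpolation theorem for analytic families of operators by absorbing the weights into a renormalized version of $S$. Assuming first that $S$ is linear (the sublinear case is discussed at the end), I would associate to $S$ the analytic family
$$
T_z f = u_0^{z/p_0} u_1^{(1-z)/p_1} \, S\bigl(v_0^{-z/p_0} v_1^{-(1-z)/p_1} f\bigr), \quad z \in \overline{\Omega},
$$
where $\Omega = \{z\in\C : 0 < \mathrm{Re}(z) < 1\}$. For $f$ a simple function of compact support, $z \mapsto T_z f$ is analytic on $\Omega$ and continuous on $\overline{\Omega}$, with admissible polynomial growth on vertical strips, so Stein's theorem applies.

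The second step is to verify the two boundary estimates in the unweighted setting. On $\mathrm{Re}(z) = 1$, the factors $v_0^{-z/p_0} v_1^{-(1-z)/p_1}$ and $u_0^{z/p_0} u_1^{(1-z)/p_1}$ differ from $v_0^{-1/p_0}$ and $u_0^{1/p_0}$ only by unimodular phases, so substituting $h = v_0^{-1/p_0} f$ into the hypothesis $\|Sh\|_{L^{p_0}(u_0)} \leq C_0 \|h\|_{L^{p_0}(v_0)}$ yields $\|T_z f\|_{L^{p_0}(dx)} \leq C_0 \|f\|_{L^{p_0}(dx)}$. The analogous calculation on $\mathrm{Re}(z) = 0$ gives the $L^{p_1}\to L^{p_1}$ bound with constant $C_1$. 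Stein's interpolation theorem then produces $\|T_\theta f\|_{L^p(dx)} \leq C_0^\theta C_1^{1-\theta} \|f\|_{L^p(dx)}$. Unwinding the definition at $z=\theta$, using $\theta/p_0 + (1-\theta)/p_1 = 1/p$ together with $u^{1/p}=u_0^{\theta/p_0}u_1^{(1-\theta)/p_1}$ and $v^{1/p}=v_0^{\theta/p_0}v_1^{(1-\theta)/p_1}$, and finally relabeling $h = v^{-1/p} f$, one recovers exactly the desired bound $\|Sh\|_{L^p(u)} \leq C\|h\|_{L^p(v)}$ with $C \leq C_0^\theta C_1^{1-\theta}$.

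The main obstacle I expect is the sublinearity hypothesis: the family $T_z$ fails to be analytic in $z$ when $S$ is merely sublinear, since the phase $v_0^{-i\,\mathrm{Im}(z)/p_0} v_1^{i\,\mathrm{Im}(z)/p_1}$ is nontrivial and $S$ need not commute with multiplication by unimodular functions. The standard remedy, going back to Stein and Weiss, is a linearization-via-duality argument: for simple $f$ and a simple dual test function $g$ with $\|g\|_{L^{p'}(u^{1-p'})} \leq 1$, choose a Borel-measurable unit-modulus $\varepsilon(x)$ with $\varepsilon\, Sf = |Sf|$, and apply the three-lines lemma to the scalar-valued function $z \mapsto \int S(f_z) \cdot \varepsilon\, g_z \, dx$, where $f_z, g_z$ are analytic extensions of $f, g$ with matching phases on the boundary lines. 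Verifying the boundary bounds only requires the originally assumed norm inequalities (not linearity), so the argument closes, and a density/approximation argument over simple $f, g$ transfers the conclusion to general $f \in L^p(v)$.
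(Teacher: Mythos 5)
The paper itself offers no proof of this lemma: it is quoted verbatim from Stein--Weiss \cite{SW}, so the only comparison available is with the classical argument. Your treatment of the \emph{linear} case is correct and is essentially that classical argument: absorbing the weights into the analytic family $T_z f = u_0^{z/p_0}u_1^{(1-z)/p_1}S\bigl(v_0^{-z/p_0}v_1^{-(1-z)/p_1}f\bigr)$, checking that on $\operatorname{Re}z=1$ and $\operatorname{Re}z=0$ the moduli of the weight factors reduce the boundary bounds to the two hypotheses with constants $C_0$ and $C_1$, and invoking Stein's interpolation theorem (modulo the routine technicalities of first truncating the weights so that $v_0^{-z/p_0}v_1^{-(1-z)/p_1}f$ is an admissible test function and then passing to the limit).

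The genuine gap is in your final paragraph, i.e.\ in the sublinearity claimed in the statement. The scalar function $z\mapsto\int S(f_z)\,\varepsilon\,g_z\,dx$ to which you propose to apply the three-lines lemma is \emph{not} analytic when $S$ is merely sublinear: the obstruction is not the outer modulus of $Sf$ (which your choice of the unimodular factor $\varepsilon$ addresses only at the single point $z=\theta$), but the inner dependence $z\mapsto S(f_z)$, which has no reason to be analytic, or even differentiable, in $z$. So the duality step does not ``close'' the argument; complex interpolation simply does not apply to a general sublinear operator, and with the exact constant $C_0^{\theta}C_1^{1-\theta}$ the general sublinear statement is delicate. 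The standard way this is repaired in practice, and the reason the lemma is harmless in this paper, is that the operators to which it is applied are linear (the pieces $T_j^{+,b}$ in the proof of Theorem \ref{Theorem 1.2}), or, in other applications, are pointwise suprema of linear operators, in which case one linearizes by a measurable selection of the truncation parameter, applies the linear case uniformly in the selection, and then takes the supremum. You should either restrict the lemma to linear $S$ (which is all that is used here) or replace the duality sketch by such a linearization argument for the specific sublinear operators at hand.
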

To prove Theorem \ref{Theorem 1.2}, the following lemma is still needed:
\begin{lemma}\label{Lemma4.3}
Suppose that $1<p<\infty$, $w\in A^{+}_{p}$ and $K$ satisfies $|K(x,y)|\leq {C}/{(y-x)}$. If the operator
$$\overline{T}^{+}=p.v\int_{x}^{\infty}K(x,y)f(y)dy$$ is of type $(L^{p}(w), L^{p}(w))$, then the operator
$$
\overline{T}_{\varepsilon}^{+}f(x)=\int_{x}^{x+\varepsilon}K(x,y)f(y)dy
$$
is of type $(L^{p}(w), L^{p}(w))$.
\end{lemma}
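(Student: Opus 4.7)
The plan is to leverage the $A_p^+$ condition together with the assumed $L^p(w)$-boundedness of $\overline{T}^+$ via a localization argument. I would partition $\mathbb{R}$ into consecutive intervals $I_n=(n\varepsilon,(n+1)\varepsilon]$ of length $\varepsilon$ and let $\widetilde{I}_n=(n\varepsilon,(n+2)\varepsilon)$ be the doubled interval to the right. The key observation is that for $x\in I_n$ one has $(x,x+\varepsilon)\subset\widetilde{I}_n$, so $\overline{T}_\varepsilon^+f(x)$ depends only on the values of $f$ on $\widetilde{I}_n$.

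First I would rewrite, for $x\in I_n$,
$$\overline{T}_\varepsilon^+f(x) \;=\; \overline{T}^+\bigl(f\chi_{\widetilde{I}_n}\bigr)(x)\;-\;\int_{x+\varepsilon}^{(n+2)\varepsilon}K(x,y)f(y)\,dy \;=:\; \overline{T}^+\bigl(f\chi_{\widetilde{I}_n}\bigr)(x)\;-\;R_n(x).$$
This identity is valid because the principal-value integral defining $\overline{T}^+(f\chi_{\widetilde{I}_n})(x)$ runs from $x$ to $(n+2)\varepsilon$. The assumed $L^p(w)$-boundedness of $\overline{T}^+$ then gives $\int_{I_n}|\overline{T}^+(f\chi_{\widetilde{I}_n})|^p w\leq C\int_{\widetilde{I}_n}|f|^p w$, and summing over $n$ costs only a factor of two, since each point of $\mathbb{R}$ lies in at most two of the doubled intervals $\widetilde{I}_n$.

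The remaining task is to control the tail $R_n(x)$. Since $y-x\geq\varepsilon$ throughout the integration, the size bound $|K(x,y)|\leq C/(y-x)$ yields $|R_n(x)|\leq (C/\varepsilon)\int_{(n+1)\varepsilon}^{(n+2)\varepsilon}|f(y)|\,dy$, which is constant in $x\in I_n$. By H\"older's inequality, raising to the $p$-th power and multiplying by $w(I_n)$ gives
$$\int_{I_n}|R_n(x)|^p w(x)\,dx \;\leq\; \frac{C^p}{\varepsilon^p}\,w(I_n)\left(\int_{(n+1)\varepsilon}^{(n+2)\varepsilon}w^{1-p'}\right)^{p-1}\int_{(n+1)\varepsilon}^{(n+2)\varepsilon}|f|^p w.$$
Choosing the triple $a=n\varepsilon<b=(n+1)\varepsilon<c=(n+2)\varepsilon$ (so $c-a=2\varepsilon$) in the definition of $A_p^+(w)$, the geometric factor in front of $\int|f|^p w$ is bounded by $2^p A_p^+(w)$, uniformly in $n$. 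Summing in $n\in\mathbb{Z}$ telescopes to $\|f\|_{L^p(w)}^p$, because the intervals $((n+1)\varepsilon,(n+2)\varepsilon)$ are pairwise disjoint.

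The only real subtlety I anticipate is the bookkeeping of principal values at the endpoints of $I_n$ and matching the correct triple in the $A_p^+$ condition to the chosen decomposition. Beyond that, the argument uses only the pointwise size bound on $K$, requires no smoothness assumption, and needs no auxiliary maximal estimate, so nothing outside the tools already recorded in the excerpt should enter the proof.
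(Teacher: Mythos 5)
Your argument is correct, and it is uniform in $\varepsilon$ (the factors $\varepsilon^{-p}$ and $(2\varepsilon)^p$ cancel), which is what the lemma requires. It shares the paper's basic localization idea --- work at scale $\varepsilon$, apply the assumed boundedness of $\overline{T}^{+}$ to a localized copy of $f$, and exploit that the leftover part only sees $y-x\gtrsim\varepsilon$, where $|K(x,y)|\leq C/\varepsilon$ --- but the execution differs in two respects. The paper localizes around an arbitrary point $h$ (windows $|x-h|<\varepsilon/4$) and splits the \emph{function} into three pieces: the near piece, where $\overline{T}_{\varepsilon}^{+}f_{1}=\overline{T}^{+}f_{1}$; a middle piece, dominated pointwise by $M^{+}f_{2}$ and handled by Sawyer's theorem that $M^{+}$ is bounded on $L^{p}(w)$ for $w\in A_{p}^{+}$; and a far piece that vanishes; the estimates are then asserted uniformly in $h$. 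You instead split the \emph{operator} on a fixed $\varepsilon$-grid, writing the truncation as the full operator applied to $f\chi_{\widetilde{I}_{n}}$ minus an explicit tail $R_{n}$, and you control $R_{n}$ directly by H\"older's inequality together with the triple $a=n\varepsilon<b=(n+1)\varepsilon<c=(n+2)\varepsilon$ in the definition of $A_{p}^{+}(w)$, summing via bounded overlap of the $\widetilde{I}_{n}$ and disjointness of the intervals $((n+1)\varepsilon,(n+2)\varepsilon)$. Your route avoids any appeal to the weighted maximal theorem and makes the summation step explicit (the paper's ``uniformly in $h$'' conclusion implicitly needs such a covering/overlap argument), at the cost of slightly more bookkeeping with the grid; both proofs rest on the same implicit convention that the singular integrals near $y=x$ are taken in the principal-value sense, so that your identity $\overline{T}_{\varepsilon}^{+}f=\overline{T}^{+}(f\chi_{\widetilde{I}_{n}})-R_{n}$ on $I_{n}$ is legitimate.
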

\begin{proof}
For $h\in \mathbb{R}$, decompose $f$ into three parts as
$$\begin{array}{rl}
\displaystyle f(y)&=\displaystyle f\chi_{\{|y-h|<{\varepsilon}/{2}\}}(y)+f\chi_{\{{\varepsilon}/{2}\leq|y-h|<{5\varepsilon}/{4}\}}(y)+f\chi_{\{|y-h|\geq{5\varepsilon}/{4}\}}(y)\\
&=:f_{1}(y)+f_{2}(y)+f_{3}(y).
\end{array}$$
When $|x-h|<{\varepsilon}/{4}$, it is easy to show
$\overline{T}_{\varepsilon}^{+}f_{1}(x)=\overline{T}^{+}f_{1}(x),$ which allows the following to be true
\begin{equation}\label{4.1}
\int_{|x-h|<{\varepsilon}/{4}}|\overline{T}_{\varepsilon}^{+}f_{1}(x)|^{p}w(x)dx\leq \int_{\mathbb{R}}|\overline{T}^{+}f_{1}(x)|^{p}w(x)dx\leq C\int_{|y-h|<{\varepsilon}/{2}} |f(y)|^{p}w(y)dy,
\end{equation}
where $C$ is independent of $h$ and the coefficients of $P(x,y)$.

The fact that if $|x-h|<{\varepsilon}/{4}$ and ${\varepsilon}/{2}\leq|y-h|<{5\varepsilon}/{4}$, then ${\varepsilon}/{4}<y-x<{3\varepsilon}/{2}$ allows the following to be shown
$$
|\overline{T}_{\varepsilon}^{+}f_{2}(x)|\leq C\int_{x+{\varepsilon}/{4}}^{x+\varepsilon}\frac{1}{(y-x)}|f_{2}(y)|dy\leq CM^{+}(f_{2})(x).
$$
On account of the boundedness of $M^{+}$, the following can be proved
\begin{equation}\label{4.2}
\int_{|x-h|<{\varepsilon}/{4}}|\overline{T}_{\varepsilon}^{+}f_{2}(x)|^{p}w(x)dx\leq C\int_{|y-h|<{5\varepsilon}/{4}} |f(y)|^{p}w(y)dy,
\end{equation}
where $C$ is independent of $h$ and the coefficients of $P(x,y)$.

Again notice that if $|x-h|<{\varepsilon}/{4}$ and $|y-h|\geq{5\varepsilon}/{4}$, then $y-x>\varepsilon$, the following can be shown
\begin{equation}\label{4.3}
\overline{T}_{\varepsilon}^{+}f_{3}(x)=0.
\end{equation}

Combining (\ref{4.1}), (\ref{4.2}) and (\ref{4.3}), the following
$$
\int_{|x-h|<{\varepsilon}/{4}}|\overline{T}_{\varepsilon}^{+}f(x)|^{p}w(x)dx\leq C\int_{|y-h|<{5\varepsilon}/{4}} |f(y)|^{p}w(y)dy.
$$
holds uniformly in $h\in \mathbb{R}$, where $C$ is independent of $h$ and the coefficients of $P(x,y)$, which implies
$$
\|\overline{T}_{\varepsilon}^{+}f\|_{L^{p}(w)}\leq C\|f\|_{L^{p}(w)},
$$
where $C$ is independent of the coefficients of $P(x,y)$.
\end{proof}

Having disposed of the above preliminary steps, the proofs of Theorem \ref{Theorem 1.2}-Theorem \ref{Theorem 1.4} can be addressed.

\subsection{Proof of Theorem \ref{Theorem 1.2}}\label{section3.1}
Using the same method in \cite{AFM}, the proof of (a) can be easily obtained. We omit it's proof here.

The proof of (b) is similar to that of
Theorem 1.5 in \cite{FSL}. This argument can now be applied again for the completeness of this paper. Suppose $P(x,y)$ is a real polynomial with degree $k$ in $x$ and degree $l$ in $y$. We shall carry out the argument by induction. For any nonzero real polynomial $P(x,y)$ in $x$ and $y$, there are $k, l, m\ge 0$ such that
\begin{equation}\label{4.4}
 P(x, y)
= a_{k l} x^k y^l + R(x,y)
\end{equation}
with $a_{kl} \ne 0$ and
\[ R(x,y)=\sum_{ 0 \le \alpha < k, 0 \le \beta \le m} a_{\alpha\beta} x^\alpha y^\beta + \sum_{0 \le \beta < l}   a_{k\beta} x^k y^\beta  .
\]
We shall write $d_x(P) = k$ and $d_y(P) = l$. Below we shall carry out the argument by using a double induction on $k$ and $l$.

If $d_x(P)=0$ and $d_y(P)$ is arbitrary,  then $P(x,y)=P(y)$ and $T^+f$ can be written as
\[T^+f(x)=\lim_{\varepsilon \to 0^+} \int^\infty_{x+\varepsilon} K(x-y) g(y) dy \]
where
$ g(y)=e^{iP(y)}f(y)$. Therefore, the conclusion of Theorem \ref{Theorem 1.2} follows from the assumption.

Let $k \ge 1$ and assume that the conclusion of Theorem \ref{Theorem 1.2} holds for all $P(x,y)$ with
$d_x(P) \le k-1$ and $d_y(P)$ arbitrary.

We will now prove that the conclusion of Theorem \ref{Theorem 1.2} holds for all $P(x,y)$ with
$d_x(P) = k$ and $d_y(P)$ arbitrary.

If $d_x(P)=k$ and $d_y(P)=0$, then
\[ P(x,y) = a_{k 0} x^k + Q(x,y) \]
with $d_x(Q) \le k-1$. By taking the factor $e^{i a_{k0} x^k}$ out of the integral sign, we see that this
case follows from the above inductive hypothesis.

Suppose $l \ge 1$ and the desired bound holds when $d_x(P) = k$ and $d_y(P) \le  l-1$. Now, let $P(x,y)$ be a polynomail with $d_x(P) = k$ and $d_y(P) = l$, as given in (\ref{4.4}).

{\it Case} 1. $|a_{kl}|=1.$

Write
$$\begin{array}{rl}
\displaystyle T^{+,b}f(x)&=\displaystyle \int_{x}^{1+x}e^{iP(x,y)}b(y-x)K(x-y)f(y)dy\\ &\quad\quad\displaystyle +\sum_{j=1}^{\infty}\int_{2^{j-1}+x}^{2^{j}+x}e^{iP(x,y)}b(y-x)K(x-y)f(y)dy\\
&=:\displaystyle T_{0}^{+,b}f(x)+T_{\infty}^{+,b}f(x)\\
&=:\displaystyle T_{0}^{+,b}f(x)+\sum_{j=1}^{\infty}T_{j}^{+,b}f(x).
\end{array}$$
Take any $h\in \mathbb{R}$, and write
$$
P(x,y)=a_{kl}(x-h)^{k}(y-h)^{l}+R(x,y,h),
$$
where the polynomial $R(x,y,h)$ satisfies the induction assumption, and the coefficients of $R(x,y,h)$ depend on $h$.

The estimates for $T_{0}^{+,b}$ is given first. It is easy to confirm that
$$\begin{array}{rl}
\displaystyle T_{0}^{+,b}f(x)&=\displaystyle \int_{x}^{1+x}e^{i(R(x,y,h)+a_{kl}(y-h)^{k+l})}b(y-x)K(x-y)f(y)dy\\
&\,\,+\displaystyle \int_{x}^{1+x}\left\{e^{iP(x,y)}-e^{i(R(x,y,h)+a_{kl}(y-h)^{k+l})}\right\}b(y-x)K(x-y)f(y)dy\\&=:\displaystyle T_{01}^{+,b}f(x)+T_{02}^{+,b}f(x).
\end{array}$$
Note that $\|b\|_{\infty}<+\infty$, by the induction assumption and Lemma \ref{Lemma4.3}, $T_{01}^{+,b}$ is a $(L^{p}(w), L^{p}(w))$ type operator
and the norm depends on $\|b\|_{\infty}$, but not on the the coefficients of $P(x,y)$ and $h$.

The estimate for the term $T_{02}^{+,b}$ can now be introduced. Evidently, if $|x-h|<{1}/{4}$ and $0<y-x<1$, then
$$
|e^{iP(x,y)}-e^{i(R(x,y,h)+a_{kl}(y-h)^{k+l})}|\leq |a_{kl}||x-y|=C(y-x).
$$
Therefore, when $|x-h|<{1}/{4}$, the following is true:
$$
|T_{02}^{+,b}f(x)|\leq C\|b\|_{\infty}\int_{x}^{x+1}|f(y)|dx\leq CM^{+}(f(\cdot)\chi_{B(h,\frac{5}{4})}(\cdot))(x).
$$
It follows that
$$
\int_{|x-h|<{1}/{4}}|T_{02}^{+,b}f(x)|^{p}w(x)dx\leq C\|b\|_{\infty}\int_{|y-h|<{5}/{4}} |f(y)|^{p}w(y)dy
$$
holds uniformly in $h\in \mathbb{R}$, which implies
\begin{equation}\label{4.5}
\|T_{0}^{+,b}f\|_{L^{p}(w)}\leq C\|f\|_{L^{p}(w)},
\end{equation}
where $C$ is independent of the coefficients of $P(x,y)$.

The estimates for $T_{\infty}^{+,b}f$ can now be given.
For $j\geq 1$, the following can be shown
$$
|T_{j}^{+,b}f(x)|\leq \int_{2^{j-1}+x}^{2^{j}+x}\frac{|f(y)|}{|x-y|}b(y-x)dy\leq C\|b\|_{\infty}M^{+}(f)(x),
$$
where $C$ is independent of $j$. By lemma \ref{Lemma4.1}, there exists $\varepsilon>0$, such that $w^{1+\varepsilon}\in A_{p}^{+}$. Thus
\begin{equation}\label{4.6}
\|T_{j}^{+,b}f\|_{L^{p}(w^{1+\varepsilon})}\leq C\|f\|_{L^{p}(w^{1+\varepsilon})},
\end{equation}
where $C$ is independent of $j$. On the other hand, recall Lemma 3.7 in \cite{FLSS} to see that
\begin{equation}\label{4.7}
\|T_{j}^{+,b}f\|_{L^{p}}\leq C2^{-j\delta}\|f\|_{L^{p}},
\end{equation}
where $C$ is dependents only on the total degree of $P(x,y)$ and $\delta>0$. From (\ref{4.6}) , (\ref{4.7}) and Lemma \ref{Lemma4.2}, it follows that
\begin{equation}\label{4.8}
\|T_{j}^{+,b}f\|_{L^{p}(w)}\leq C2^{-j\theta\delta}\|f\|_{L^{p}(w)},
\end{equation}
where $0<\theta<1$, $\theta$ is independent of $j$, and $C$ depends only on the total degree of $P(x,y)$.
Thus
\begin{equation}\label{4.9}
\|T_{\infty}^{+,b}f\|_{L^{p}(w)}\leq C\|f\|_{L^{p}(w)}.
\end{equation}

Now (\ref{4.5}) and (\ref{4.9}) imply that
\begin{equation}\label{4.10}
\|T_{b}^{+}f\|_{L^{p}(w)}\leq C\|f\|_{L^{p}(w)},
\end{equation}
where $C$ depends not on the the coefficients of $P(x,y)$.

{\it Case} 2. $|a_{kl}|\neq 1$.
Write $\lambda=|a_{kl}|^{{1}/{(k+l)}}$, and
$$
P(x,y)=\lambda^{-(k+l)}a_{kl}(\lambda x)^{k}(\lambda y)^{l}+R({\lambda x}/{\lambda},{\lambda y}/{\lambda})=Q(\lambda x,\lambda y).
$$
Thus
$$\begin{array}{rl}
\displaystyle T^{+,b}f(x)&=\displaystyle \mathrm{p.v.}\int e^{iQ(\lambda x, \lambda y)}b(y-x)K(x-y)f(y)dy\\
&=\displaystyle \mathrm{p.v.}\int e^{iQ(\lambda x,y)}b({(y-x)}/{\lambda})K(\lambda x-y)f({y}/{\lambda})dy
\end{array}$$

By Lemma \ref{Lemma4.1}, $\|b({(\cdot)}/{\lambda})\|_{\infty}=\|b\|_{\infty}$, so this case goes back to the result in case 1. On account of the estimates for case 1 and case 2 given above, the following can be proved:
$$
\|T^{+,b}f\|_{L^{p}(w)}\leq C\|f\|_{L^{p}(w)},
$$
where $C$ depends not on the coefficients of $P(x,y)$. \qed

\subsection{Proof of Theorem \ref{Theorem 1.3}}\label{section3.2}
(a) implies (b): This step is obvious.

(b) implies (c): Write
$$\begin{array}{rl}
\displaystyle T^{+}f(x)&=\displaystyle \int_{x}^{1+x}e^{iP(x,y)}K(x-y)f(y)dy+\int_{x+1}^{\infty}e^{iP(x,y)}K(x-y)f(y)dy\\
&=:\displaystyle T_{0}^{+}f(x)+T_{\infty}^{+}f(x).
\end{array}$$
From the method similar to the proof of (\ref{4.9}), $T_{\infty}^{+}$ is a $(L^{p}(w), L^{p}(w))$ type operator for $1<p<\infty, w\in A_{p}^{+}$, so is $T_{0}^{+}$.

Let $h\in \mathbb{R}$. Then for $|x-h|<1$,
$$T_{0}^{+}f(x)\leq T_{0}^{+}[f(\cdot)\chi_{I(h,2)}(\cdot)](x),$$
where $I(x_{0},r)$ denotes the interval $[x_{0}-r,x_{0}+r]$.
Thus,
\begin{align*}
\left(\int_{|x-h|<1} |T_{0}^{+}f(x)|^{p}w(x)dx\right)^{{1}/{p}}
&\leq\left(\int_{|x-h|<1} |T_{0}^{+}[f(\cdot)\chi _{I(h,2)}(\cdot)](x)|^{p}w(x)dx\right)^{{1}/{p}}\\
&=\|T_{0}^{+}f(\cdot)\chi _{I(h,2)}(\cdot)\|_{L^{p}(w)}\\
&\leq C\left(\int_{|y-h|<2} |f(y)|^{p}w(y)dy\right)^{{1}/{p}},
\end{align*}
where $C$ is independent of $h$.

Since $P(x,y)=P(x-h,y-h)+P_{0}(x,h)+P_{1}(y,h)$ with $h\in \mathbb{R}$ and $P_{0}$, $P_{1}$ are real polynomials defined on $\mathbb{R}$, it follows that
\begin{align*}
\widetilde{T}_{0}^{+}f(x)
&\leq p.v \int_{x}^{x+1}K(x-y)f(y)\chi _{I(h,2)}(y)dy\\
&=p.v e^{-iP_{0}(x,h)}\int_{x}^{x+1}e^{iP(x,y)}K(x-y)e^{-iP(x-h,y-h)}e^{-iP_{1}(y,h)}f(y)\chi _{I(h,2)}(y)dy.
\end{align*}
The Taylor's expression of $e^{-iP(x-h,y-h)}$ is
\begin{align*}
e^{-iP(x-h,y-h)}
&=\sum_{m=0}^{\infty}\frac{(-i)^{m}}{m!}\left[\sum_{\alpha,\beta}a_{\alpha,\beta}(x-h)^{\alpha}(y-h)^{\beta}\right]^{m}\\
&=\sum_{m=0}^{\infty}\frac{(-i)^{m}}{m!}\sum_{l}C_{m,l}b_{\alpha,\beta,l}(x-h)^{u(\alpha,\beta,l)}(y-h)^{v(\alpha,\beta,l)}.
\end{align*}
Therefore, if we set $|x-h|\leq A<1, |y-h|\leq B<2$, the following can be shown:
\begin{align*}
&\left(\int_{|x-h|<1} |\widetilde{T}_{0}^{+}f(x)|^{p}w(x)dx\right)^{{1}/{p}}\\
&=\Big(\int_{|x-h|<1}\big|e^{-iP_{0}(x,h)}\int_{x}^{x+1}e^{iP(x,y)}K(x-y)e^{-iP(x-h,y-h)}e^{-iP_{1}(y,h)}\\
&\,\,\,\,\,\,\,\,\,\,\,\,\,\,\,\times f(y)\chi _{I(h,2)}(y)dy\big|w(x)dx\Big)^{{1}/{p}}\\
&=\Big  (\int_{|x-h|<1}\big|e^{-iP_{0}(x,h)}\int_{x}^{x+1}e^{iP(x,y)}K(x-y)\sum_{m=0}^{\infty}\frac{(-i)^{m}}{m!}\sum_{l}C_{m,l}b_{\alpha,\beta,l}\\
&\,\,\,\,\,\,\,\,\,\,\,\,\,\,\,\times (x-h)^{u(\alpha,\beta,l)}(y-h)^{v(\alpha,\beta,l)}e^{-iP_{1}(y,h)}f(y)\chi _{I(h,2)}(y)dy\big|^{p}w(x)dx\Big)^{{1}/{p}}\\
&\leq \sum_{m=0}^{\infty}\sum_{l}\frac{|C_{m,l}b_{\alpha,\beta,l}|}{m!}\Big(\int_{|x-h|<1}\big|(x-h)^{u}\int_{x}^{x+1}e^{iP(x,y)}K(x-y)\\
&\,\,\,\,\,\,\,\,\,\,\,\,\,\,\,\times (y-h)^{v}e^{-iP_{1}(y,h)}f(y)\chi _{I(h,2)}(y)dy\big|^{p}w(x)dx\Big)^{{1}/{p}}
\end{align*}
\begin{align*}
&\leq \sum_{m=0}^{\infty}\sum_{l}\frac{|C_{m,l}b_{\alpha,\beta,l}|}{m!}A^{u}\Big(\int_{|x-h|<1}|T_{0}^{+}[e^{-iP_{1}(\cdot,h)}\\
&\,\,\,\,\,\,\,\,\,\,\,\,\,\,\,\times f(\cdot)\chi _{I(h,2)}(\cdot)(\cdot-h)^{v}](x)|^{p}w(x)dx\Big)^{{1}/{p}}\\
&\leq\sum_{m=0}^{\infty}\sum_{l}\frac{|C_{m,l}b_{\alpha,\beta,l}|}{m!}A^{u}\left(\int_{|y-h|<2}|f(y)|^{p}|(y-h)^{v}|^{p}w(y)dy\right)^{{1}/{p}}\\
&= \sum_{m=0}^{\infty}\sum_{l}\frac{|C_{m,l}b_{\alpha,\beta,l}|}{m!}A^{u}B^{v}\left(\int_{|y-h|<2}|f(y)|^{p}w(y)dy\right)^{{1}/{p}}\\
&=C\sum_{m=0}^{\infty}\frac{1}{m!}(\sum_{\alpha,\beta}|a_{\alpha,\beta}|A^{\alpha}B^{\beta})^{m}\left(\int_{|y-h|<2}|f(y)|^{p}w(y)dy\right)^{\frac{1}{p}}\\
&=C\exp(\sum_{\alpha,\beta}|a_{\alpha,\beta}|A^{\alpha}B^{\beta})\left(\int_{|y-h|<2}|f(y)|^{p}w(y)dy\right)^{\frac{1}{p}}.
\end{align*}
Thus
$$\|\widetilde{T}_{0}^{+}f\|_{L^{p}(w)}\leq C\|f\|_{L^{p}(w)}.$$

(c) implies (a):
Set $$\displaylines{
b(r)=\left\{\begin{array}{ll}
\displaystyle 1,&\quad r\in[0,1),\\
\displaystyle 0,&\quad r\in[1,+\infty)
.\end{array}\right.}$$
It is easy to see that $b(r)$ is a bounded variation function on $[0,+\infty)$.

Since the truncated operator $\widetilde{T}_{0}^{+}$ is a $(L^{p}(w), L^{p}(w))$ type operator, from Theorem \ref{Theorem 1.2}, the operator $T_{0}^{+}$ is a $(L^{p}(w), L^{p}(w))$ type operator. By the methods similar to the proof of (\ref{4.9}), the operator $T_{\infty}^{+}$ is a $(L^{p}(w), L^{p}(w))$ type operator. Therefore $T^{+}$ is a $(L^{p}(w), L^{p}(w))$ type operator, which implies Theorem \ref{Theorem 1.3}.

\qed

\subsection{Proof of Theorem \ref{Theorem 1.4}}\label{section3.3}

The proof of Theorem \ref{Theorem 1.4} can also be addressed by a double induction on the degree in $x$ and $y$ of the polynomial $P$.
Set
$$
P(x,y)=a_{kl}x^{k}y^{l}+R(x,y).
$$
Since our conclusion is clearly invariant under dialation by the proof of Theorem \ref{Theorem 1.2}, it is suitable to assume that $|a_{kl}|=1$.

If $k=0$, the conclusion holds from the result in \cite{AFM}. For general $P(x,y)$,
\begin{align*}
T_{\ast}^{+}f(x)
&\leq \sup_{0<\varepsilon<1} \left|\int_{x+\varepsilon}^{\infty}e^{iP(x,y)}K(x-y)f(y)dy\right|+\sup_{\varepsilon\geq1} \left|\int_{x+\varepsilon}^{\infty}e^{iP(x,y)}K(x-y)f(y)dy\right|\\
&\leq \sup_{0<\varepsilon<1} \left|\int_{x+\varepsilon}^{x+1}e^{iP(x,y)}K(x-y)f(y)dy|+|\int_{x+1}^{\infty}e^{iP(x,y)}K(x-y)f(y)dy\right|\\
&\,\,\,\,\,\,\,\,\,\,+\sup_{\varepsilon\geq1} \left|\int_{x+\varepsilon}^{\infty}e^{iP(x,y)}K(x-y)f(y)dy\right|\\
&=T_{\ast,0}^{+}f(x)+\left|\int_{x+1}^{\infty}e^{iP(x,y)}K(x-y)f(y)dy\right|+T_{\ast,\infty}^{+}f(x).
\end{align*}
Now, it suffices to prove that $T_{\ast,0}^{+}$ and $T_{\ast,\infty}^{+}$ are $(L^{p}(w), L^{p}(w))$ type operators. By the method similar to prove (\ref{4.5}), $T_{\ast,0}^{+}$ is $(L^{p}(w), L^{p}(w))$ type operators for $w\in A_{p}^{+}$ and the norm of $T_{\ast,0}^{+}$ depends on the total degree of $P(x,y)$, not on the coefficients of $P(x,y)$.

For the term $T_{\ast,\infty}f(x)$, there is a $J\in Z^{+}$ such that $2^{J-1}\leq \varepsilon <2^{J}$ allows the following to be shown:
\begin{align*}
T_{\ast,\infty}f(x)
&\leq \sup_{J\in Z^{+}} \left|\int_{2^{J-1}}^{2^{J}}\frac{|f(x-y)|}{|y|}dy\right|+\sup_{J\in Z^{+}}\sum_{j=J+1} \left|\int_{x+2^{j-1}}^{x+2^{j}}e^{iP(x,y)}K(x-y)f(y)dy\right|\\
&\leq M^{+}f(x)+\sum_{j=1}^{\infty} \left|\int_{x+2^{j-1}}^{2^{j}}e^{iP(x,y)}K(x-y)f(y)dy\right|.
\end{align*}
By the boundedness of $M^{+}$ and the method similar to prove (\ref{4.9}), we can derive the following:
$$\|T_{\ast,\infty}^{+}f\|_{L^{p}(w)}\leq C\|f\|_{L^{p}(w)}$$
where $C$ depends on the total degree of $P(x,y)$, not on the coefficients of $P(x,y)$.   \qed

{\bf Acknowledgements.}\quad  The authors thank the anonymous referees cordially for their valuable suggestions on this paper.


\end{document}